\newtheorem{thm}{Theorem}[section]
\newtheorem{lem}[thm]{Lemma}
\newtheorem{prop}[thm]{Proposition}
\theoremstyle{remark}
\newtheorem{re}[thm]{Remark}
\newcommand{\sgn}{\operatorname{sgn}}
\newcommand{\lcm}{\mbox{lcm}}
\newcommand{\cls}{\mbox{cls}\,}
\newcommand{\spn}{\mbox{spn}\,}
\newcommand{\gen}{\mbox{gen}\,}
\newcommand{\Span}{\mbox{Span}\,}
\newcommand{\R}{\mathbb{R}}
\newcommand{\Q}{\mathbb{Q}}
\newcommand{\C}{\mathbb{C}}
\newcommand{\Z}{\mathbb{Z}}
\newcommand{\N}{\mathbb{N}}
\renewcommand{\Re}{\operatorname{Re}}
\newcommand{\im}{\operatorname{Im}}
\newcommand{\SL}{\operatorname{SL}}
\DeclareSymbolFont{largesymbol}{OMX}{yhex}{m}{n}
\DeclareMathAccent{\Widehat}{\mathord}{largesymbol}{"62}
\numberwithin{equation}{section}
\title[Sign changes over split or inert primes]{Sign changes of Fourier coefficients of cusp forms of half-integral weight over split and inert primes in quadratic number fields}
\author{Zilong He}
\address{Department of Mathematics, University of Hong Kong, Pokfulam, Hong Kong}
\email{zilonghe@hku.hk}
\author{Ben Kane}
\address{Department of Mathematics, University of Hong Kong, Pokfulam, Hong Kong}
\email{bkane@hku.hk}
\date{\today}
\begin{document}
\begin{abstract}
In this paper, we investigate sign changes of Fourier coefficients of half-integral weight cusp forms. In a fixed square class $t\Z^2$, we investigate the sign changes in the $tp^2$-th coefficient as $p$ runs through the split or inert primes over the ring of integers in a quadratic extension of the rationals. We show that infinitely many sign changes occur in both sets of primes when there exists a prime dividing the discriminant of the field which does not divide the level of the cusp form and find an explicit condition that determines whether sign changes occur when every prime dividing the discriminant also divides the level.
\end{abstract}
\keywords{half-integral weight modular forms, sign changes, Fourier coefficients, quadratic number fields, quadratic forms}
\subjclass[2010]{11F37,11F30,11N69,11R11,11E20}

\thanks{The research of the second author was supported by grants from the Research Grants Council of the Hong Kong SAR, China (project numbers HKU 17302515, 17316416, 17301317 and 17303618).}
\maketitle

\section{Introduction}
Throughout this paper, we let $ k\ge 1 $ and $ N\ge 4 $ be integers and $ 4\mid N $. We denote by $ S_{k+1/2}(N,\psi) $ the space of cusp forms of weight $ k+1/2 $ for the group $ \Gamma_{0}(N) $ with a Dirichlet character $ \psi $ modulo $ N $ and $ S_{3/2}^{*}(N,\psi) $ the orthogonal complement with respect to the Petersson scalar product of the subspace $ U(N,\psi) $ generated by unary theta functions. We also set $ S^{*}_{k+1/2}(N,\psi)=S_{k+1/2}(N,\psi) $ for $ k\ge 2 $. 

Each $ \mathfrak{f}\in S_{k+1/2}^{*}(N,\psi) $ has a Fourier expansion given by
\[
\mathfrak{f}(z)=\sum\limits_{n=1}^{\infty}\mathfrak{a}_{\mathfrak{f}}(n)q^{n},
\]
where $ q:=e^{2\pi iz} $ with $ z\in\mathbb{H} $, the complex upper half-plane. Under the assumption that $\mathfrak{a}_{\mathfrak{f}}(n)\in\R$, many authors have studied the change of signs $\sgn\left(\mathfrak{a}_{\mathfrak{f}}(n)\right)$ as $n$ runs through natural sequences (see for instance \cite{BruinierKohnen,IwaniecKohnenSengupta,KnoppKohnenPribitkin,KohnenSengupta,LauWu}).  For example, in \cite[Theorem 1]{kohnen_fourier_2013} it was shown that for a squarefree positive integer $ t $, if $\mathfrak{a}_{\mathfrak{f}}(t)\neq 0$ for some $\mathfrak{f}\in S_{k+1/2}^*(N,\psi)$, then there are infinitely many sign changes in the sequence
\begin{equation}\label{eqn:primeseq}
\left(\mathfrak{a}_{\mathfrak{f}}\left(tp_n^2\right)\right)_{n=1}^{\infty},
\end{equation}
where $p_n$ is the $n$-th prime. Formally, given a real sequence $ \{a(n)\}_{n=1}^{\infty} $, we say that $ \{a(n)\}_{n=1}^{\infty} $ \textit{exhibits} or \textit{has a sign change} at $n_0\in\N$ (or between $n_0$ and $n_0+1$) if $ a(n_{0})a(n_{0}+1)<0$. Letting $K:=\mathbb{Q}\left(\sqrt{D}\right)$ be a quadratic extension of $\Q$, where $1\neq D\in \Z$ is a fundamental discriminant, it is natural to ask whether there are infinitely many sign changes when the sequence \eqref{eqn:primeseq} is restricted to the subsequence $\mathfrak{a}_{\mathfrak{f}}\left(tp^2\right)$ with $p$ running over all split (resp. inert) primes in the ring of integers $\mathcal{O}_{K}$, or even more generally in arithmetic progressions $p\equiv m\pmod{M}$ for some fixed $m$ and $M$.  Since $p$ is split (resp. inert) in $\mathcal{O}_K$ if and only if $\chi_D(p)=1$ (resp. $\chi_D(p)=-1$), where $\chi_D(n):=(D/n)$ denotes the Kronecker--Jacobi--Legendre symbol, we let $p_{D,n,+}$ denote the $n$-th prime which is split in $\mathcal{O}_K$ and $p_{D,n,-}$ denote the $n$-th prime which is inert in $\mathcal{O}_K$. For each $\varepsilon\in \{\pm \}$, we investigate sign changes across the sequences
\begin{equation}\label{eqn:primeDseq}
\left(\mathfrak{a}_{\mathfrak{f}}\left(tp_{D,n,\varepsilon}^2\right)\right)_{n=1}^{\infty}.
\end{equation}
\begin{thm}\label{thm:signchange}
Let $ k\ge 1 $ be an integer, $ N\ge 4 $ an integer divisible by $ 4 $, and $ \psi $ be a Dirichlet character modulo $ N $. Suppose that $\mathfrak{f}\in S_{k+1/2}^*(N,\psi)$ has real Fourier coefficients and $t\geq 1$ is a squarefree integer such that $\mathfrak{a}_{\mathfrak{f}}(t)\neq 0$. If $D$ is a fundamental discriminant for which there exists an odd prime $\ell\mid D$ with $\ell\nmid N$, then there are infinitely many sign changes in both of the sequences
\[
\left(\mathfrak{a}_{\mathfrak{f}}\left(tp_{D,n,+}^2\right)\right)_{n=1}^{\infty}
\]
and 
\[
\left(\mathfrak{a}_{\mathfrak{f}}\left(tp_{D,n,-}^2\right)\right)_{n=1}^{\infty}.
\]
More specifically, there exists a small constant $ \delta=\delta_{\mathfrak{f},t,D}>0 $ such that for sufficiently large $x$, there is a sign change with $p_{D,n,\varepsilon}$ in the interval $[x^{\delta},x]$.
\end{thm}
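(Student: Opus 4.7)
The plan is to reduce, via the Shimura correspondence, the question about $\mathfrak{a}_{\mathfrak{f}}(tp^2)$ to a question about partial sums of Hecke eigenvalues of integral-weight lifts restricted to primes with $\chi_D(p)=\varepsilon$, and then to derive the sign changes by contradiction, comparing a Rankin--Selberg lower bound with an upper bound coming from the analytic properties of the associated $L$-functions and their $\chi_D$-twists.

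First, decompose $\mathfrak{f}=\sum_j c_j\mathfrak{f}_j$ in an orthogonal basis of Hecke eigenforms in $S_{k+1/2}^{*}(N,\psi)$, and let $F_j$ be the Shimura lift of $\mathfrak{f}_j$ with Hecke eigenvalues $\lambda_j(p)$. For $p\nmid Nt$ one has the standard identity
\[
\mathfrak{a}_{\mathfrak{f}_j}(tp^2)=\mathfrak{a}_{\mathfrak{f}_j}(t)\Big(\lambda_j(p)-\psi(p)\chi_{(-1)^{k}t}(p)\,p^{k-1}\Big),
\]
so that $p^{-(k-1/2)}\mathfrak{a}_{\mathfrak{f}}(tp^2)$ is, up to an $O(p^{-1/2})$ tail, a finite real-linear combination of the normalised eigenvalues $\lambda_j(p)p^{-(k-1/2)}$. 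Using the identity $\mathbf{1}_{\chi_D(p)=\varepsilon}=\tfrac{1}{2}(1+\varepsilon\chi_D(p))$ valid for $p\nmid D$, the problem reduces to controlling, for each $j$, both the unweighted and the $\chi_D$-twisted partial sums of $\lambda_j(p)p^{-(k-1/2)}$ over primes $p\le x$.

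Here is where the hypothesis $\ell\mid D$, $\ell\nmid N$ enters crucially: since the level of each $F_j$ divides a suitable multiple of $N$ while the conductor of $\chi_D$ equals $|D|$ and is divisible by $\ell$, the twist $F_j\otimes\chi_D$ has strictly larger conductor than $F_j$; in particular no $F_j$ has CM by $K$ (i.e.\ $F_j\otimes\chi_D\ne F_j$). Hence $L(s,F_j)$ and $L(s,F_j\otimes\chi_D)$ are distinct entire cuspidal $\mathrm{GL}_2$ $L$-functions, and a standard Tauberian / zero-free region argument yields
\[
\sum_{p\le x}\frac{\lambda_j(p)}{p^{k-1/2}}\ \text{ and }\ \sum_{p\le x}\frac{\chi_D(p)\lambda_j(p)}{p^{k-1/2}}\ \ll\ x\exp\bigl(-c\sqrt{\log x}\bigr),
\]
so the restricted sum $S_\varepsilon(x):=\sum_{p\le x,\,\chi_D(p)=\varepsilon}\mathfrak{a}_{\mathfrak{f}}(tp^2)p^{-(k-1/2)}$ is $o(\pi(x))$, and trivially estimating the $[1,x^\delta]$ piece keeps the same bound on $[x^\delta,x]$.

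On the other hand, a Rankin--Selberg computation (diagonal terms give the main contribution, off-diagonal ones decay by strong multiplicity one, and the hypothesis $\mathfrak{a}_{\mathfrak{f}}(t)\ne 0$ prevents vanishing) yields
\[
\sum_{x^\delta\le p\le x}\bigl(\mathfrak{a}_{\mathfrak{f}}(tp^2)\bigr)^{2}p^{-(2k-1)}\ \gg_\delta\ x/\log x,
\]
and a second character split (applied now to the squared coefficients, using that $L(s,F_j\otimes\overline{F_{j'}}\otimes\chi_D)$ is also entire by the same CM exclusion) shows that a positive proportion comes from primes with $\chi_D(p)=\varepsilon$. If $\mathfrak{a}_{\mathfrak{f}}(tp_{D,n,\varepsilon}^{2})$ had constant sign across all primes in $[x^\delta,x]$, Cauchy--Schwarz would force $|S_\varepsilon(x)-S_\varepsilon(x^\delta)|\gg_\delta x/(\log x)^{3/2}$, contradicting the upper bound above and producing a sign change in $[x^\delta,x]$. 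The main obstacle is precisely this CM exclusion: without the hypothesis on $\ell$, some eigenform $F_j$ could be CM by $K$, in which case $\lambda_j(p)=0$ whenever $\chi_D(p)=-1$ and the split versus inert sums would inherit cancellation from one another rather than being independently controllable. Secondary technicalities are making the Tauberian input effective enough to produce the explicit short interval $[x^\delta,x]$, and carrying the eigenform decomposition through every step since $\mathfrak{f}$ itself need not be a Hecke eigenform.
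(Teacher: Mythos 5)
Your proposal is correct and follows the same overall strategy as the paper: pass through the Shimura lift, detect the condition $\chi_D(p)=\varepsilon$ via $\tfrac12(1+\varepsilon\chi_D(p))$, show the restricted first moment of $\mathfrak{a}_{\mathfrak{f}}(tp^2)$ is small while the restricted second moment is large, and derive the sign change in $[x^\delta,x]$ by contradiction; crucially, you also identify the correct role of the hypothesis $\ell\mid D$, $\ell\nmid N$, namely that it forces $\ell^2$ into the level of any $\chi_D$-twist and hence rules out both $F_j\otimes\chi_D=F_j$ (CM) and $F_j\otimes\chi_D=F_{j'}$, which is exactly the paper's Lemma \ref{lem:f*}. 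The implementation differs in two respects. First, the paper lifts $\mathfrak{f}$ once to $f_t\in S_{2k}(N/2,\psi^2)$ and decomposes $f_t$ by Atkin--Lehner into genuinely distinct newforms $f_i$, with $\sum_i c_i=\mathfrak{a}_{\mathfrak{f}}(t)\neq 0$ guaranteeing a nonzero diagonal; your decomposition of $\mathfrak{f}$ into half-integral-weight eigenforms forces you to regroup terms whose Shimura lifts coincide before "strong multiplicity one" applies to the off-diagonal terms, which is the point you flag but which the paper's ordering of steps makes automatic. Second, you work with unweighted prime sums and claim savings of the shape $x\exp(-c\sqrt{\log x})$, which requires zero-free regions (and, for a fixed form, the absence of real zeros near $s=1$); the paper instead uses the logarithmically weighted sums $\sum_{p\le x}\lambda(p)\overline{\lambda'(p)}/p=\kappa\log\log x+O(1)$ of Kohnen--Lau--Wu, which need only holomorphy and non-vanishing at $s=1$ of the relevant Rankin--Selberg $L$-functions and still yield the interval $[x^\delta,x]$ via $\log(\log x/\log y)$. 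Your route works but is analytically heavier than necessary; the weighted version buys the same quantitative conclusion from softer input.
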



In order to describe the existence or non-existence of sign changes when every odd prime dividing $D$ also divides the level of the cusp form $\mathfrak{f}\in S_{k+1/2}^*(N,\psi)$, we require the Shimura lift \cite{Shimura1973}. For a squarefree positive integer $ t $, $ \mathfrak{f} $ can be lifted to a cusp form $ f_{t}\in S_{2k}(N/2,\psi^{2}) $ 
\[
f_{t}(z)=\sum\limits_{n=1}^{\infty}a_{f_{t}}(n)q^{n}
\]
by the $t$-th Shimura correspondance. Here the $ n $-th coefficient $ a_{f_{t}}(n)$ of $ f_{t} $ is given by
\begin{equation}\label{eq:relationaft_af}
a_{f_{t}}(n)=\sum\limits_{d\mid n}\psi_{t,N}(d)d^{k-1}\mathfrak{a}_{\mathfrak{f}}(t\dfrac{n^{2}}{d^{2}}),
\end{equation}
where $ \psi_{t,N} $ denotes the character
\[
\psi_{t,N}(d):=\psi(d)\left(\dfrac{(-1)^{k}t}{d}\right).
\]
Thus in particular 
\begin{equation}\label{eq:Shimuraliftcoeff}
a_{f_t}(p)=\psi_{t,N}(p) p^{k-1}\mathfrak{a}_{\mathfrak{f}}(t)+\mathfrak{a}_{\mathfrak{f}}\left(tp^2\right).
\end{equation}
We also require some properties of quadratic twists of modular forms. For a modular form $ f $ with Fourier expansion ($q:=e^{2\pi i z}$)
\[
f(z)=\sum_{n\geq 1}a_f(n) q^n
\]
and a character $\chi$, define \begin{it}the twist of $f$ by $\chi$\end{it} by
\begin{equation}\label{eqn:ftwist}
(f\otimes \chi)(z):=\sum_{n\geq 1} \chi(n)a_f(n) q^n.
\end{equation}
If $f$ is a primitive form, then $f\otimes\chi$ is a Hecke eigenform, but not necessarily primitive. We write $f_{\chi}^{\star}$ for the primitive form associated to the twist $f\otimes \chi$ (see Remark \ref{re:inducedprimitiveforms} for details). Also, we say that two characters $ \chi_{1} $ and $ \chi_{2} $ are \textit{almost equal} if $ \chi_{1}(p)=\chi_{2}(p) $ for all primes $ p $ not dividing their smallest positive periods and denote it by $ \chi_{1}\simeq \chi_{2} $. Otherwise, we say that $ \chi_{1} $ and $ \chi_{2} $ are not almost equal and denote it by $ \chi_{1}\not\simeq\chi_{2} $.

\begin{thm}\label{thm:signchangedivisor}
Let $ k\ge 1 $ be an integer, $ N\ge 4 $ an integer divisible by $ 4 $, and $ \psi $ be a Dirichlet character modulo $ N $. Suppose that $\mathfrak{f}\in S_{k+1/2}^*(N,\psi)$ has real Fourier coefficients and $t\geq 1$ is a squarefree integer such that $\mathfrak{a}_{\mathfrak{f}}(t)\neq 0$. Let $D\neq 1$ be a fundamental discriminant for which every odd prime dividing $D$ also divides $N$. Then the following hold.
\begin{enumerate}[leftmargin=*,align=left,label={\rm(\arabic*)}]
\item The sequence \eqref{eqn:primeDseq} (with $\varepsilon=\pm$ fixed) restricted to the primes $p\nmid N$ does not exhibit sign changes if and only if $\psi_{t,N}\simeq\chi_{D}^{j}$ with $ j\in\{0,1\} $ and the $t$-th Shimura correspondence $ f_{t} $ satisfies 
\begin{equation}\label{eqn:ftnosignchange}
f_t\otimes\chi_{N^2}= \sum_{i} c_i\left(f_i-\varepsilon \left(f_i\right)_{\chi_D}^{\star}\right)\otimes\chi_{N^2}
\end{equation}
for some $c_i\in\C$ and where $f_i$ runs through a full set of primitive forms of level $M\mid N$. Moreover, if \eqref{eqn:primeDseq} does not exhibit sign changes for $\varepsilon$, then it does exhibit sign changes for $-\varepsilon$. There is at most one squarefree $t$ for which no sign changes occur and, if it exists, $t\mid N$. 

\item
 There exists a choice of $N$, $t\geq 1$ squarefree, a Dirichlet character $\psi$ modulo $N$, a fundamental discriminant $D\neq 1$, and $\mathfrak{g}\in S_{3/2}^*(N,\chi)$ such that $\mathfrak{a}_{\mathfrak{g}}\left(tp_{D,n,\varepsilon}^2\right)$ exhibits sign changes for precisely one of $\varepsilon=\pm$.
\end{enumerate}

\end{thm}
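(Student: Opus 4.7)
The plan is to reduce the analysis to the Shimura lift $f_t\in S_{2k}(N/2,\psi^2)$ via \eqref{eq:Shimuraliftcoeff}, which for primes $p\nmid N$ gives
\[
\mathfrak{a}_{\mathfrak{f}}(tp^2)=a_{f_t}(p)-\psi_{t,N}(p)p^{k-1}\mathfrak{a}_{\mathfrak{f}}(t).
\]
A key preliminary observation is that for a primitive form $f_i$ of level dividing $N$, the $p$-th coefficient of $(f_i-\varepsilon(f_i)^{\star}_{\chi_D})\otimes\chi_{N^2}$ at $p\nmid N$ equals $a_{f_i}(p)(1-\varepsilon\chi_D(p))$, vanishing exactly when $\chi_D(p)=\varepsilon$. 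Consequently \eqref{eqn:ftnosignchange} is equivalent to the assertion that $a_{f_t}(p)=0$ for every prime $p\nmid N$ with $\chi_D(p)=\varepsilon$, and the ``if'' direction of part (1) is then immediate: the two conditions combine to give $\mathfrak{a}_{\mathfrak{f}}(tp^2)=-\varepsilon^j p^{k-1}\mathfrak{a}_{\mathfrak{f}}(t)$ on the relevant primes, a sequence of constant sign.

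For the ``only if'' direction I argue contrapositively using the integral-weight cusp form
\[
g_\varepsilon:=\tfrac{1}{2}\bigl(f_t\otimes\chi_{N^2}+\varepsilon(f_t\otimes\chi_{N^2})\otimes\chi_D\bigr),
\]
whose $p$-th coefficient equals $a_{f_t}(p)$ when $p\nmid N$ and $\chi_D(p)=\varepsilon$, and zero otherwise. If $g_\varepsilon\not\equiv 0$, a Kohnen--Sengupta-style sign-change theorem for integral-weight cusp forms, combined with a Rankin--Selberg second-moment lower bound $\sum_{p\leq x,\,\chi_D(p)=\varepsilon,\,p\nmid N}a_{f_t}(p)^2\gg x^{2k-1}/\log x$, produces primes $p_1,p_2$ with $\chi_D(p_i)=\varepsilon$, $a_{f_t}(p_1)a_{f_t}(p_2)<0$, and $|a_{f_t}(p_i)|\gg p_i^{k-1/2-\eta}$ for small $\eta>0$; this transfers to a sign change in $\mathfrak{a}_{\mathfrak{f}}(tp^2)$ since $p^{k-1/2}$ dominates the ``noise'' $p^{k-1}|\mathfrak{a}_{\mathfrak{f}}(t)|$, contradicting the hypothesis. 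Hence $g_\varepsilon\equiv 0$, i.e.\ \eqref{eqn:ftnosignchange} holds. With this vanishing established, the residual identity $\mathfrak{a}_{\mathfrak{f}}(tp^2)=-\psi_{t,N}(p)p^{k-1}\mathfrak{a}_{\mathfrak{f}}(t)$ has constant sign only if $\psi_{t,N}$ is constant on the coset $\chi_D^{-1}(\varepsilon)$; a direct character calculation (lifting constancy on a single coset to factoring through $\chi_D$ via multiplicativity) yields $\psi_{t,N}\simeq\chi_D^j$ for some $j\in\{0,1\}$. The main obstacle is the quantitative sign-change step: establishing the required lower bound on $|a_{f_t}(p)|$ on the density-$1/2$ subset defined by $\chi_D(p)=\varepsilon$ demands sharp Rankin--Selberg asymptotics compatible with this restriction.

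For the moreover assertions: if \eqref{eqn:ftnosignchange} held for both $\varepsilon=+1$ and $\varepsilon=-1$, then $g_{+}=g_{-}=0$, so $f_t\otimes\chi_{N^2}\equiv 0$, i.e.\ $a_{f_t}(p)=0$ for every $p\nmid N$. Plugging this into \eqref{eq:relationaft_af} recursively for $n=p,p^2,p^3,\ldots$ pins down $\mathfrak{a}_{\mathfrak{f}}(tp^{2m})$ for all $m\geq 0$ in terms of $\psi_{t,N}(p)$ and $\mathfrak{a}_{\mathfrak{f}}(t)$, leading to rigid algebraic relations among the coefficients of $\mathfrak{f}$; combined with the orthogonality to unary thetas encoded in $\mathfrak{f}\in S_{k+1/2}^*$, this rigidity forces $\mathfrak{a}_{\mathfrak{f}}(t)=0$, contradicting our hypothesis. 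Hence sign changes must occur for at least one of $\pm\varepsilon$, yielding the claim that no sign changes for $\varepsilon$ forces sign changes for $-\varepsilon$. For uniqueness of $t$ (and $t\mid N$), one compares the decompositions \eqref{eqn:ftnosignchange} for two distinct candidate squarefree values $t_1,t_2$: the respective Shimura lifts $f_{t_i}$ and characters $\psi_{t_i,N}\simeq\chi_D^{j_i}$ must be mutually compatible through \eqref{eq:relationaft_af}, and tracking the levels of newform components rules out divisors $t_i$ with $t_i\nmid N$, leaving at most one admissible $t$.

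For part (2) I construct an explicit example via a CM newform. Choose a weight-$2$ primitive form $f$ of level $M\mid N/2$ with complex multiplication by a quadratic character $\chi_D$, so that $(f)^{\star}_{\chi_D}=f$. Then $\tfrac{1}{2}(f-(-1)(f)^{\star}_{\chi_D})=f$, so \eqref{eqn:ftnosignchange} holds for $\varepsilon=-1$ with a single term $c_1=\tfrac{1}{2}$, $f_1=f$, whereas for $\varepsilon=+1$ it would force $f\otimes\chi_{N^2}=0$, which fails since $f$ has nonzero Hecke eigenvalues at primes coprime to $N$. By the surjectivity of the Shimura correspondence (Niwa--Kohnen--Cipra) onto the appropriate weight-$3/2$ subspace, there exists $\mathfrak{g}\in S_{3/2}^*(N,\psi)$ whose $t$-th Shimura lift is $f$ for some squarefree $t$ and Dirichlet character $\psi$; the parameters can be chosen so that $\psi_{t,N}\simeq\chi_D^j$ and $\mathfrak{a}_{\mathfrak{g}}(t)\neq 0$. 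The equivalence of part (1) applied to $\mathfrak{g}$ then yields no sign changes for $\varepsilon=-1$ and sign changes for $\varepsilon=+1$. A concrete candidate arises from the weight-$2$ CM form of conductor $49$ attached to an elliptic curve with complex multiplication by $\mathbb{Q}(\sqrt{-7})$, giving $D=-7$.
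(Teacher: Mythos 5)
Your reduction to the Shimura lift via \eqref{eq:Shimuraliftcoeff}, the observation that \eqref{eqn:ftnosignchange} forces $a_{f_t}(p)=0$ for $p\nmid N$ with $\chi_D(p)=\varepsilon$, and the subsequent character analysis showing $\psi_{t,N}\simeq\chi_D^j$ all track the paper's argument. But there are two genuine gaps, each at the hardest point of the respective part.

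In the ``only if'' direction of (1), you need to produce from $g_\varepsilon\neq 0$ two primes $p_1,p_2$ with $\chi_D(p_i)=\varepsilon$, opposite signs of $a_{f_t}(p_i)$, \emph{and} the pointwise lower bound $|a_{f_t}(p_i)|\gg p_i^{k-1/2-\eta}$, so that the shift by $\psi_{t,N}(p)p^{k-1}\mathfrak{a}_{\mathfrak{f}}(t)$ cannot flip the sign. You flag this yourself as ``the main obstacle,'' and it is a real one: no Kohnen--Sengupta-type result delivers individual lower bounds of that strength on the prescribed coset $\chi_D(p)=\varepsilon$, and such pointwise lower bounds on Hecke eigenvalues at primes are not available. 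The paper never descends to individual primes: it compares the restricted first moment $\sum_{p\le x,\,\chi_D(p)=\varepsilon}\mathfrak{a}_{\mathfrak{f}}(tp^2)p^{-k-1/2}=O(1)$ with the restricted second moment, which equals $C\log\log x+O(1)$, and the entire content of Lemma \ref{lem:afestimation}(3) is the computation of $C$ as (half of) $S_1+\varepsilon S_2+(\varepsilon+1)S_1^{\star}$, rewritten as a sum of squares in \eqref{eqn:Ceval2} so that $C=0$ exactly when \eqref{eqn:ftbad} holds; if the coefficients had constant sign on $[x^{\delta},x]$, Deligne's bound would make the second moment $\ll$ the first, a contradiction when $C>0$. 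You must either adopt this moment comparison or supply the missing pointwise input; as written the step fails. (Two smaller points: your claimed equivalence of \eqref{eqn:ftnosignchange} with the vanishing of $a_{f_t}(p)$ on the coset needs the linear independence of the Hecke eigensystems of the $f_i$ and $f_i^{\star}$, which is where the hypothesis that every odd prime of $D$ divides $N$ and the bookkeeping of the sets $I^{\star}$ and $J$ enter; and your route to the ``moreover'' claim through recursion and orthogonality to unary thetas is both vague and unnecessary, since $f_t\otimes\chi_{N^2}\equiv 0$ already contradicts the fact that its first coefficient is $a_{f_t}(1)=\mathfrak{a}_{\mathfrak{f}}(t)\neq 0$.)

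For part (2), taking the Shimura lift to be a CM newform is conceptually the right mechanism, but the assertion that ``the parameters can be chosen so that $\psi_{t,N}\simeq\chi_D^j$ and $\mathfrak{a}_{\mathfrak{g}}(t)\neq 0$'' begs the question: by Waldspurger, $|\mathfrak{a}_{\mathfrak{g}}(t)|^2$ is proportional to a central twisted $L$-value, and its nonvanishing for a squarefree $t$ that \emph{simultaneously} satisfies $\psi_{t,N}\simeq\chi_D^j$ is precisely the consistency issue that part (2) exists to settle (the paper's remark after the theorem says exactly this); surjectivity of the Shimura correspondence in weight $3/2$ is also more delicate than you allow because of the unary theta obstruction. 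The paper sidesteps all of this with an explicit ternary form $Q_2=4x^2+48y^2+49z^2+4xz+48yz$ admitting a primitive spinor exception at $t=1$, where one verifies directly that $a_f(t,Q_2)=-r(t,\spn(Q_2))\neq 0$ and that $a_f(tp^2,Q_2)$ has constant sign over the inert primes of $\Q(\sqrt{-3})$. You would need to replace the appeal to surjectivity by an actual nonvanishing verification for a concrete $t$.
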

\begin{re}
The twist by $\chi_{N^2}$ in Theorem \ref{thm:signchangedivisor} (1) precisely annihilates the coefficients that are not relatively prime to the level, which do not affect the sign changes except for possibly finitely many primes dividing $N$ and not dividing $D$.

If $f_i$ and $f_i^{\star}=f_j$ are both newforms of level dividing $N$, then the term $f_i-\varepsilon f_i^{\star}$ occurs twice (once for $i$ and once for $j$) unless $f_i=f_i^{\star}$, in which case we say that $f_i$ has CM by $\chi_D$ (see \eqref{eqn:CMdef}).

Note that although Theorem \ref{thm:signchangedivisor} (1) gives an if and only if statement, it is not immediately clear that the conditions are consistent with the assumption that $\mathfrak{a}_{\mathfrak{f}}(t)\neq 0$. The existence of such a form is the content of Theorem \ref{thm:signchangedivisor} (2). The counterexample from Theorem \ref{thm:signchangedivisor} (2) is constructed via the theory of quadratic forms, and in particular spinor genus theory. Although we only construct one explicit example, many can be constructed in an analogous way. 

We determine a precise criterion which implies alternation of the coefficients in both cases and obtain Theorem \ref{thm:signchange} by showing that the criterion cannot be satisfied in this case; to obtain Theorem \ref{thm:signchangedivisor} (1) we need to determine precisely when this criterion holds. Arguing via orthogonality of characters, one should be able to generalize the results in this paper to show that there are sign changes in arithmetic progressions $p\equiv m\pmod{M}$ as long as $\gcd(M,N)=1$, but the counterexample in Theorem \ref{thm:signchangedivisor} (2) implies that the gcd condition is necessary.

\end{re}

 It might be interesting to investigate sign changes of Fourier coefficients of integral weight cusp forms across split or inert primes or arithmetic progressions as well. 

The paper is organized as follows. In Section \ref{sec:prelim}, we give some preliminaries and necessary information about quadratic twists. In Section \ref{sec:convolution} we give some useful information about the growth of convolution $L$-functions. In Section \ref{sec:SignChangeThm}, we investigate the case when not every prime dividing $D$ divides the level of the modular form, proving Theorem \ref{thm:signchange}. In Section \ref{sec:signchangedivisor}, we investigate the case when all of the divisors of $D$ divide the level, proving Theorem \ref{thm:signchangedivisor}.

\section*{Acknowledgements}
The authors thank Yuk-Kam Lau for many helpful discussions and the anonymous referees for many useful corrections and comments.

\section{Preliminaries}\label{sec:prelim}
\subsection{Modular forms and quadratic twists}

Let $  \gamma=
\begin{pmatrix}
	a & b \\
	c & d
\end{pmatrix}\in \SL_{2}(\mathbb{Z})$ and $ z\in \mathbb{H} $. A \begin{it}fractional linear transformation\end{it} is defined by 
$\gamma z:=\dfrac{az+b}{cz+d}$.
Write $ j(\gamma,z):=cz+d $. For a finite-index subgroup $ \Gamma\subseteq SL_{2}(\mathbb{Z}) $ and a \begin{it}weight\end{it} $ w\in\mathbb{R} $, a \textit{multiplier system} (we may omit the multiplier if it is trivial) is defined as a function $ \nu:\Gamma\to\mathbb{C} $ such that
\begin{align*}
\nu(\gamma A)j(\gamma A,z)^{w}=\nu(A)j(A,\gamma z)^{w}\nu(\gamma)j(\gamma,z)^{w}
\end{align*}
for all $ \gamma,A\in \Gamma $ and $|\nu(\gamma)|=1$. Here and throughout we take the \begin{it}principal branch\end{it} $Z^w:=|Z|^w e^{iw\operatorname{Arg}(Z)}$ with $-\pi< \operatorname{Arg}(Z)\leq \pi$. Also, the \textit{slash operator} $ |_{w,\nu} $ of weight $ w $ and multiplier system $ \nu $ is defined as
\begin{align*}
f|_{w,\nu}\gamma:=\nu(\gamma)^{-1}j(\gamma,z)^{-w}f(\gamma z).
\end{align*}
We call elements of $\Gamma\backslash\Q\cup\{i\infty\}$ the \begin{it}cusps\end{it} of $\Gamma$. For any cusp $\varrho$ of $\Gamma$, we let $\sigma=\sigma_{\varrho}\in\Q\cup\{i\infty\}$ be a representative and choose a matrix $\gamma_{\sigma}$ that sends $i\infty$ to $\sigma$. We say that $f$ \begin{it}grows at most polynomially\end{it} towards the cusp $\varrho$ (or $\sigma$) if there exists $r\in\R$ such that 
\[
\im(y)^rf\big|_{w}\gamma_{\sigma}(z)
\]
is bounded as $y\to \infty$. 

Using the above notation, we may give a general definition of modular forms that includes both integral and half-integral weight. A \textit{holomorphic modular form} of weight $ w\in\mathbb{R} $ and multiplier system $ \nu $ for the subgroup $ \Gamma $ is a function $ f:\mathbb{H}\to \mathbb{C} $ satisfying the following:
\begin{enumerate}[leftmargin=*,align=left,label={\rm(\arabic*)}]
\item
 $ f(z) $ is holomorphic on $ \mathbb{H} $;
\item
 $ f|_{w,\nu}\gamma=f$ for all $ \gamma\in \Gamma $;
\item $f$ grows at most polynomially towards every cusp.
\end{enumerate}

\noindent If moreover $ f $ vanishes at every cusp, then $ f $ is called a \begin{it}cusp form\end{it}.

We are particularly interested in the case when $w$ is a half-integer and $\Gamma=\Gamma_0(L)$, where 
\[
\Gamma_0(L):=\left\{\begin{pmatrix}
	a & b \\
	c & d
\end{pmatrix}\in \SL_{2}(\mathbb{Z}): L\mid c\right\}.
\]
The \begin{it}theta multiplier\end{it} is defined by 
\[
\nu_{\Theta}:=\frac{\Theta(\gamma z)}{j(\gamma,z)^{\frac{1}{2}} \Theta(z)},
\]
where $\Theta$ is the usual weight $1/2$ unary theta function 
\[
\Theta(z):=\sum_{n\in\Z} q^{n^2}.
\]
For a character $\psi:\Z\to \C$ and $w\in\frac{1}{2}\Z$, we define the multiplier system 
\[
\nu_{\psi,w}(\gamma):=\psi(d) \nu_{\Theta}^{2w}
\]
and call any modular form of weight $w$ and multiplier system $\nu_{\psi,w}$ on $\Gamma=\Gamma_0(L)$ a modular form of weight $w$ with \begin{it}level\end{it} $L$ and \begin{it}Nebentypus character\end{it} (or just character) $\psi$. Let $ S_{k}(L,\psi) $ be the space of holomorphic cusp forms of integral weight $ k\ge 2 $ and level $ L $ and Nebentypus $ \psi $. We denote by $ L^{*} $ the conductor of the character $ \psi $ and by $ S_{k}^{new}(L,\psi) $ the orthogonal complement with respect to the Petersson inner product of the subspace generated by all forms $ g(\ell z) $, where $ g\in S_{k}(M,\psi_{M}) $ has a strictly lower level $M\mid L  $ and $ L^{*}\mid M $. If $ f\in S_{k}^{new}(L,\psi) $ is a common eigenfunction for all Hecke operators and its first coefficient equals one, then $ f $ is called a \begin{it}primitive form\end{it}. We denote by $ H_{k}^{*}(L,\psi) $ the set of all primitive forms of weight $ k $, level $ L $ and Nebentypus $ \psi $.


We require some properties of quadratic twists of modular forms. For a real character $\chi=\chi_D$, we say that a Hecke eigenform $f$ has \begin{it}CM by $\chi$\end{it} (or \begin{it}CM by the field $\Q(\sqrt{D})$\end{it}) if $a_f(p)=0$ whenever $p$ is a prime for which $\chi(p)=-1$, or in other words if 
\begin{equation}\label{eqn:CMdef}
f\otimes \chi=f. 
\end{equation}

The following is well known, but we supply a proof for the convenience of the reader. 
\begin{prop}\label{prop:fchiHecke}
	Let $ \chi $ be a Dirichlet character modulo $ q $. If $ f\in S_{k}(M,\psi) $ is a Hecke eigenform, then $ f\otimes \chi\in S_{k}(Mq^{2},\psi\chi^{2}) $ is also a Hecke eigenform.
\end{prop}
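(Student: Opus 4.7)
The proof splits into two stages: first establish that $f\otimes\chi \in S_k(Mq^2, \psi\chi^2)$, then show it is a simultaneous Hecke eigenform. For the modularity stage, assuming first that $\chi$ is primitive modulo $q$, the Gauss sum expansion of $\chi$ yields
\[
(f\otimes\chi)(z) \;=\; \frac{1}{\tau(\overline{\chi})}\sum_{a\bmod q}\overline{\chi}(a)\,f(z+a/q),
\]
and I would verify the transformation law directly. For $\gamma=\begin{pmatrix}\alpha & \beta\\ c & d\end{pmatrix}\in\Gamma_0(Mq^2)$, a short computation gives the matrix identity
\[
\begin{pmatrix}1 & a/q\\0 & 1\end{pmatrix}\gamma \;=\; \gamma_a\begin{pmatrix}1 & ad^2/q\\0 & 1\end{pmatrix}
\]
with $\gamma_a\in\Gamma_0(M)$ having lower-right entry congruent to $d$ modulo $M$. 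Using $f\mid_k\gamma_a = \psi(d)\,f$ and re-indexing the sum by $a\mapsto ad^{-2}\bmod q$ (valid because $\gcd(d,q)=1$) produces the extra factor $\overline{\chi}(d^{-2}) = \chi(d)^2$, so
\[
(f\otimes\chi)\mid_k\gamma \;=\; \psi(d)\,\chi(d)^2\,(f\otimes\chi) \;=\; (\psi\chi^2)(d)(f\otimes\chi),
\]
the desired Nebentypus. When $\chi$ is imprimitive of conductor $q^{*}\mid q$, I would first twist by the inducing primitive character $\chi^{*}$ at level $M(q^{*})^2$ and then enforce $\gcd(n,q)=1$ through a M\"obius-weighted combination of $V_d U_d$ operators, which raises the level to at most $Mq^2$ and preserves the character. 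Cuspidality is inherited from $f$.

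For the Hecke eigenform stage, I would compute the action of the Hecke operators on the Fourier coefficients of $g:=f\otimes\chi$ directly. For $p\nmid Mq$,
\[
a_{T_p g}(n) \;=\; \chi(p)\chi(n)\,a_f(pn) + \psi(p)\chi(p)^2 p^{k-1}\chi(n/p)\,a_f(n/p).
\]
Substituting the Hecke relation $a_f(pn) = \lambda_p(f)\,a_f(n) - \psi(p)p^{k-1}a_f(n/p)$, the two $a_f(n/p)$ contributions cancel by virtue of the identity $\chi(p)\chi(n)=\chi(p)^2\chi(n/p)$ (valid whenever $p\mid n$), leaving $T_pg = \chi(p)\lambda_p(f)\,g$. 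For primes $p\mid q$, $\chi(p)=0$ forces $a_g(n)=0$ whenever $p\mid n$, so $U_pg=0$. For $p\mid M$ with $p\nmid q$, applying the Hecke relation $a_f(pn)=\lambda_p(f)a_f(n)$ directly to $a_{U_pg}(n) = \chi(pn)a_f(pn)$ yields $U_pg = \chi(p)\lambda_p(f)\,g$.

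The main obstacle is the bookkeeping in the modularity stage: tracking the permutation of the index $a$ induced by conjugation by $\Gamma_0(Mq^2)$ and checking that the Gauss-sum weights combine to produce exactly the factor $\chi(d)^2$, plus the minor (but fiddly) reduction from the imprimitive case to the primitive one without inflating the level beyond $Mq^2$. Once the level and Nebentypus are correctly identified, the Hecke eigenvalue computation is a short manipulation of Dirichlet coefficients; the only care required is to correctly identify the operator ($T_p$ versus $U_p$) in each of the three prime ranges.
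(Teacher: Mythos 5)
Your proposal is correct, and on the Hecke-eigenvalue half it is essentially the paper's argument: both verify the eigenform property by a direct manipulation of Fourier coefficients. The paper does this in one stroke for every Hecke operator $T_m$, using the multiplicative identity $\sum_{d\mid\gcd(n,m)}\psi(d)d^{k-1}a_f(nm/d^2)=a_f(m)a_f(n)$ together with complete multiplicativity of $\psi(d)d^{k-1}$, so that $a_{T_m(f\otimes\chi)}(n)=\chi(m)a_f(m)a_{f\otimes\chi}(n)$ follows after pulling out $\chi(mn)$ from the divisor sum; your prime-by-prime version (good primes via the recursion, $p\mid q$ via $\chi(p)=0$, $p\mid M$ via the $U_p$ relation) reaches the same conclusion with a little more case bookkeeping but no extra hypotheses. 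The genuine divergence is in the first half: the paper simply cites Koblitz (Proposition 17(b)) for $f\otimes\chi\in S_k(Mq^2,\psi\chi^2)$, whereas you reprove it via the Gauss-sum expansion for primitive $\chi$ and a M\"obius/$V_dU_d$ reduction for the imprimitive case. Your matrix identity and the reindexing $a\mapsto ad^{-2}$ producing the factor $\chi(d)^2$ are correct (one needs $q^2\mid c$ for $\gamma_a$ to be integral, which is exactly why the level is $Mq^2$), and the level in the imprimitive reduction does stay within $Mq^2$ since $q^{\ast}\prod_{p\mid q,\,p\nmid q^{\ast}}p$ divides $q$. So your route buys self-containedness at the cost of length; the paper's buys brevity by outsourcing precisely the part you labour over.
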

\begin{proof}
	Write $ a_{f}(n) $ for the $ n $-th Fourier coefficient of $ f $ and $ T_{m} $ for the $m$-th Hecke operator as usual. If $ f\in S_{k}(M,\psi) $, then $ f\otimes \chi\in S_{k}(Mq^{2},\psi\chi^{2})$ by \cite[Proposition 17 (b), p.\hskip 0.1cm 127]{koblitz_introduction_1993}. Since $ f $ is a Hecke eigenform, we have
	\begin{align*}
	a_{f}(p^{n+1})=a_{f}(p)a_{f}(p^{n})-\psi(p)p^{k-1}a_{f}(p^{n-1})
	\end{align*}
	for all primes $ p $ and $ n\ge 1 $. Also, $ a_{f}(d_{1}d_{2})=a_{f}(d_{1})a_{f}(d_{2}) $ for any $  d_{1},d_{2}\in \mathbb{N} $ with $ \gcd(d_{1},d_{2})=1 $, i.e., $ a_{f}(n) $ is multiplicative. Combining this with the fact that $ \psi(d)d^{k-1} $ is completely multiplicative, we furthermore have
	\begin{align*}
	\sum\limits_{d\mid\gcd(n,m)} \psi (d) d^{k-1}a_{f}\left(\dfrac{nm}{d^{2}}\right)=a_{f}(m)a_{f}(n)
	\end{align*}
	by \cite[Exercises 30, 31, pp.\hskip 0.1cm 49--50]{apostol_introduction_1986}. Hence one can check that
	\begin{align*}
	a_{T_{m}(f\otimes \chi)}(n)&=\sum\limits_{d\mid\gcd(n,m)} (\psi\chi^{2})(d) d^{k-1}a_{f}(\dfrac{nm}{d^{2}})\chi(\dfrac{nm}{d^{2}})\\
	&=\chi(mn) \sum\limits_{d\mid\gcd(n,m)} \psi (d) d^{k-1}a_{f}(\dfrac{nm}{d^{2}})\\
	&=\chi(m)\chi(n)a_{f}(m)a_{f}(n)\\
	&=\chi(m)a_{f}(m)a_{f\otimes \chi}(n) 
	\end{align*}
	for all $ m\in\mathbb{N} $.
\end{proof}
\begin{re}\label{re:inducedprimitiveforms}
	Given a Dirichlet character $ \chi $ modulo $ q $, if $ f\in S_{k}(M,\psi) $ is a primitive form, then $ f\otimes \chi\in S_{k}(Mq^{2},\psi\chi^{2}) $ is a Hecke eigenform by Proposition \ref{prop:fchiHecke}. In general, $ f\otimes \chi $ may not be primitive, but there exists a unique primitive form $ f^{\star}\in S_{k}(M^{\prime},\psi^{\star}) $ with $ M^{\prime}\mid Mq^{2} $ such that $ \psi\chi^{2}(n)=\psi^{\star}(n) $ and $ a_{f\otimes \chi}(n)=a_{f^{\star}}(n) $ for any $ n $ prime to $ Mq^{2} $. \cite[REMARK (2), p.\hskip 0.1cm 133 and EXERCISE 5, p.\hskip 0.1cm 376]{iwaniec_analytic_2004}.
\end{re}
Given a Dirichlet character $ \chi $ and a Hecke eigenform $ f $, we denote by $ f_{\chi}^{\star} $ (or simply $ f^{\star} $, when the context is clear) the primitive form induced by a cusp form $ f\otimes\chi  $ as in Remark \ref{re:inducedprimitiveforms}. In particular, if $ f\otimes \chi $ is primitive, then $ f_{\chi}^{\star}=f\otimes \chi $. When $ \chi $ is real, $ (f^{\star})^{\star}=f $. 

Although $f\otimes \chi$ need not be primitive, it is primitive under certain conditions. Specifically, from \cite[Proposition 14.19 and 14.20]{iwaniec_analytic_2004}, if $ f\in H_k^{*}(M,\psi) $ and $ \chi $ is a primitive character modulo $ q $ with $ \gcd(q,M)=1 $, then $ f\otimes \chi\in H_{k}^{*}(Mq^{2},\psi\chi^{2}) $. 

\begin{lem}\label{lem:f*}
Let $k,M\in\N$ and $\psi$ and $\chi$ be characters such that the conductor of $\psi$ divides $M$ and there exists a prime $\ell$ such that $\ell$ divides the conductor of $\chi$ but $\ell$ does not divide $M$. Then for any $f\in H_{k}^*(M,\psi)$, we have that $f_{\chi}^{\star}\in H_{k}^{*}\left(M'\ell^2,\psi\chi^2\right)$ for some $M'\in\N$.

 In particular, if $\chi$ is real, $f$ does not have CM by $\chi$ and if $g\in H_{k}^*(N,\psi')$ with $\ell^2\nmid N$ or $\psi'\neq \psi$, then $g\neq f^{\star}$. 
\end{lem}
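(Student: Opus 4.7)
The plan is to reduce the lemma to the coprime-conductor primitive-twist case quoted just above the lemma (from Iwaniec--Kowalski), by isolating the prime $\ell$ via a two-stage twist.

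First, observe that $f_{\chi}^{\star}$ depends only on the primitive character $\chi^{*}$ inducing $\chi$: the Hecke eigenvalues of $f\otimes\chi$ and $f\otimes\chi^{*}$ agree at every prime coprime to the conductor of $\chi$, so strong multiplicity one forces their associated newforms to coincide. We may therefore replace $\chi$ by $\chi^{*}$ and assume $\chi$ is primitive. Factor $\chi=\chi_{\ell}\chi'$, where $\chi_{\ell}$ is primitive of conductor $\ell^{a}$ with $a\geq 1$ (this is possible precisely because $\ell$ divides the conductor of $\chi$) and $\chi'$ is primitive of conductor $q'$ coprime to $\ell$.

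Next, carry out the twist in two stages. By Proposition \ref{prop:fchiHecke}, $f\otimes\chi'$ is a Hecke eigenform in $S_{k}(Mq'^{2},\psi\chi'^{2})$; let $h:=(f\otimes\chi')^{\star}\in H_{k}^{*}(M'',\psi_{h})$ denote the associated primitive form. Since $\ell\nmid M$ and $\ell\nmid q'$, we have $\ell\nmid M''$. Now $\chi_{\ell}$ is primitive of conductor $\ell^{a}$, which is coprime to $M''$, so the Iwaniec--Kowalski result quoted in the excerpt yields that $h\otimes\chi_{\ell}\in H_{k}^{*}(M''\ell^{2a},\psi_{h}\chi_{\ell}^{2})$ is a primitive form. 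At every prime $p$ coprime to $Mq'\ell$, both $h\otimes\chi_{\ell}$ and $f_{\chi}^{\star}$ have $p$-th Fourier coefficient equal to $\chi(p)a_{f}(p)$, so strong multiplicity one forces $f_{\chi}^{\star}=h\otimes\chi_{\ell}$. Hence $f_{\chi}^{\star}\in H_{k}^{*}(M'\ell^{2},\psi\chi^{2})$ with $M':=M''\ell^{2a-2}$, which proves the main statement.

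For the ``in particular'' part, since $\chi$ is real we have $\chi^{2}(p)=1$ at every prime $p$ coprime to the conductor of $\chi$, so the Nebentypus $\psi\chi^{2}$ of $f^{\star}$ agrees with $\psi$ outside a finite set of primes; thus the primitive character inducing the Nebentypus of $f^{\star}$ equals the primitive character inducing $\psi$. Now suppose $g\in H_{k}^{*}(N,\psi')$ equals $f^{\star}$. Then $N$ equals the level of $f^{\star}$, which the main part shows is divisible by $\ell^{2}$, and $\psi'$ equals $\psi$ as primitive Dirichlet characters. The contrapositive yields $g\neq f^{\star}$ whenever $\ell^{2}\nmid N$ or $\psi'\neq\psi$. (The non-CM hypothesis is actually automatic here, since the level of $f^{\star}$ is divisible by $\ell^{2}$ while that of $f$ is not, forcing $f^{\star}\neq f$.)

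The main obstacle is justifying the identification $f_{\chi}^{\star}=h\otimes\chi_{\ell}$ in the two-stage twist: one needs to know that $h\otimes\chi_{\ell}$ is genuinely primitive of the expected level. The potentially degenerate case in which $h$ has CM by $\chi_{\ell}$ is excluded by the level constraint---CM by a character of $\ell$-power conductor would force $\ell\mid M''$, contradicting $\ell\nmid M''$---so the Iwaniec--Kowalski statement applies as claimed.
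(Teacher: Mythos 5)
Your proposal is correct and follows essentially the same route as the paper: factor $\chi$ into its $\ell$-part and prime-to-$\ell$ part, twist in two stages, and invoke the Iwaniec--Kowalski fact that twisting a primitive form by a primitive character of conductor coprime to the level yields a primitive form whose level picks up the square of that conductor, forcing $\ell^2$ into the level of $f_{\chi}^{\star}$. The only difference is that you spell out (via strong multiplicity one) the identity $f_{\chi}^{\star}=\bigl(f_{\chi'}^{\star}\bigr)_{\chi_{\ell}}^{\star}$ and the reduction to primitive $\chi$, which the paper simply asserts as \eqref{eqn:f*rel}.
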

\begin{proof}
Note that if $\chi=\chi_1\chi_2$, then 
\begin{equation}\label{eqn:f*rel}
f_{\chi}^{\star}=f_{\chi_1\chi_2}^{\star}=\left(f_{\chi_1}^{\star}\right)_{\chi_2}^{\star}.
\end{equation}
Let $\ell \mid q$ with $\ell \nmid M$ be given and split $\chi=\chi_1\chi_2$ so that the conductor $q'$ of $\chi_1$ is relatively prime to $\ell$ and the conductor of $\chi_2$ is an $\ell$-power $\ell^r$. Note that since $h:=f_{\chi_1}^{\star}$ is primitive and its level $M_{h}$ (dividing $M q'^2$) is relatively prime to $\ell$, 
\[
f_{\chi_1}^{\star}\otimes \chi_2
\]
is primitive of level $M_{h}\ell^{2r}$. Since $r\geq 1$, we see by \eqref{eqn:f*rel} that
\[
f_{\chi}^{\star}=\left(f_{\chi_1}^{\star}\right)_{\chi_2}^{\star}=f_{\chi_1}^{\star}\otimes \chi_2
\]
has $\ell^2$ dividing its level. This is the first claim.

Since $\ell^2\nmid M$, we immediately obtain that $f\neq f^{\star}$, so $f$ does not have CM by $\chi$. Finally, if $g\in H_{k}^{*}(N,\psi')$ and $\ell^2\nmid N$, then we immediately obtain that $g\neq f^{\star}$, as they have different levels. If $\psi'\neq \psi$, then they are not equal because they have different Nebentypus. 
\end{proof}

\subsection{Quadratic forms}

Let $ V $ be a quadratic space over $ \mathbb{Q} $ associated with a symmetric bilinear map $ B:V\times V\to \mathbb{Q} $ and write $ Q(x)=B(x,x) $, $ x\in V $. We denote by $ O(V) $ the orthogonal group of $ V $ and $ O^{\prime}(V) $ the kernel of the homomorphism $ \theta:O(V)\to \mathbb{Q}^{\times}/\mathbb{Q}^{\times 2} $ as usual. Let $O_{\mathbb{A}}(V)   $ and $ O_{\mathbb{A}}^{\prime}(V)  $ be the adelic groups of $ O(V) $ and $ O^{\prime}(V) $, respectively. Let $ L $ be a $ \mathbb{Z} $-lattice on $ V $. We define the \textit{class} $ \cls(L) $, \textit{spinor genus} $ \spn(L) $ and \textit{genus} $ \gen(L) $ of $ L $ by the orbits of $ L $ under the actions of $ O(V) $, $ O(V)O_{\mathbb{A}}^{\prime}(V) $ and $ O_{\mathbb{A}}(V) $ respectively [see \cite{kitaoka_arithmetic_1993} for more details].

For $ n\in \mathbb{N} $, if there exists some $ x_{0}\in L $ such that $ Q(x_{0})=n $, then we say that $ n $ is represented by $ L $ and denote by $ r(n,L) $ the number of representation of $ n $ by $ L $. Also, we define the number of representations of $ n $ by the genus (resp. spinor genus) of $ L $ by the \begin{it}Siegel--Weil average\end{it}
\begin{align*}
r(n,\gen(L)):=&\left(\sum\limits_{\tiny K\in \gen(L)}\dfrac{1}{|O(K)|}\right)^{-1}\sum\limits_{\tiny K\in\gen(L)}\dfrac{r(n,K)}{|O(K)|}
\intertext{and}
 r(n,\spn(L)):=&\left(\sum\limits_{\tiny K\in \spn(L)}\dfrac{1}{|O(K)|}\right)^{-1}\sum\limits_{\tiny K\in\spn(L)}\dfrac{r(n,K)}{|O(K)|},
 \end{align*}
where the summuation is over a complete set of representatives of the classes in the genus (resp. spinor genus) of $ L $. A quadratic form $ Q $ can be always associated with a lattice $ L_{Q} $ and hence we abuse the notations $ r(n,Q) $, $ r(n,\gen(Q)) $ and $ r(n,\spn(Q)) $ standing for $ r(n,L_{Q}) $, $ r(n,\gen(L_{Q})) $ and $ r(n,\spn(L_{Q})) $.

For a ternary quadratic form $ Q$ of discriminant $ D $ and level $ N $, the theta series $ \theta_{Q} $ associated with $ Q $ is given by
\[
\theta_{Q}(z):=\sum\limits_{n=0}^{\infty}r(n,Q)q^{n},
\]
and $\theta_{Q}\in M_{3/2}(N,\psi) $ for an appropriate $\psi$  (see for example \cite[Proposition 2.1]{Shimura1973}). It is well known that the theta series can be expressed as
\begin{equation}\label{eqn:thetaQexpand}
\theta_{Q}(z)=E(z)+H(z)+f(z),
\end{equation}
where $ E(z)=\sum\limits_{n=0}^{\infty}a_{E}(n,Q)q^{n} $ is in the space spanned by Eisenstein series, $ H(z)=\sum\limits_{n=0}^{\infty}a_{H}(n,Q)q^{n}\in U(N,\psi) $ and $ f(z)=\sum\limits_{n=0}^{\infty}a_{f}(n,Q)q^{n}\in S_{3/2}^{*}(N,\psi) $ (\cite[Lemma 4]{schulze-pillot_representation_1990}). In the theory of quadratic forms, the coefficients $ a_{E}(n,Q) $ and $ a_{H}(n,Q) $ can be interpreted as (see for example \cite[Theorem 2]{schulze-pillot_representation_1990}),
\begin{equation}\label{eqn:genspnexpand} 
a_{E}(n,Q)=r(n,\gen(Q))\qquad\text{and}\qquad a_{H}(n,Q)=r(n,\spn(Q))-r(n,\gen(Q)).
\end{equation}

\section{Convolution \texorpdfstring{$L$}{L}-series}\label{sec:convolution}

Hereafter, we assume that all the summations involving the notation $ p $ run over the specific subsets of all the primes. The following lemma follows immediately by replacing $ f $ by $f\otimes \chi$ in \cite[Lemma 2.1]{kohnen_fourier_2013} and it agrees with their results when $ \chi $ is a trivial character.

\begin{lem}\label{lem:lambdafg_primitive}
	Let $f\in  H_{k}^{*}(M_{f},\psi_{f})$ and $ g\in H_{k}^{*}(M_{g},\psi_{g})$ whose $ n$-th coefficients are $ \lambda_{f}(n)n^{(k-1)/2} $ and  $ \lambda_{g}(n)n^{(k-1)/2} $. Let $ \chi $ be a primitive character modulo $ q $ and $ \gcd(q,M_{f})=1 $. Then as $ x\to\infty  $, we have
	\begin{equation}\label{eq:lambdaf_primitive}
	\sum\limits_{\substack{p\le x\\p\nmid M_{f}q}}\dfrac{\chi(p)\lambda_{f}(p)}{p}=O(1)
	\end{equation}
	and
	\begin{equation}\label{eq:lambdaf2_primitive}
	\sum\limits_{\substack{p\le x\\p\nmid M_{f}q}}\dfrac{|\lambda_{f}(p)|^{2}}{p}=\log\log x +O(1).
	\end{equation}
	If $ g\not=f\otimes \chi $, then as $ x\to \infty $, we have
	\begin{equation}\label{eq:lambdafg_primitive}
	\sum\limits_{\substack{p\le x\\p\nmid M_{f}q}}\dfrac{\chi(p)\lambda_{f}(p)\overline{\lambda_{g}(p)}}{p}=O(1).
	\end{equation}
	The implied constants in \eqref{eq:lambdaf_primitive} and \eqref{eq:lambdaf2_primitive} depend on the form $ f $ and the character $ \chi $ and that in \eqref{eq:lambdafg_primitive} depends on the forms $ f, g $ and the character $ \chi $.
\end{lem}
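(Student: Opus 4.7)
The plan is to reduce all three estimates to \cite[Lemma 2.1]{kohnen_fourier_2013} applied to the twist $f\otimes\chi$ rather than to $f$ itself. As recalled in the paragraph preceding Lemma \ref{lem:f*}, since $\chi$ is a primitive character of conductor $q$ with $\gcd(q,M_f)=1$, the twist $f\otimes\chi$ is itself a primitive form, lying in $H_k^*(M_f q^2,\psi_f\chi^2)$. For every prime $p\nmid M_f q$ its normalized $p$-th Fourier coefficient equals $\chi(p)\lambda_f(p)$, and the set of primes dividing $M_f q^2$ coincides with the set of primes dividing $M_f q$, so the excluded primes in the cited lemma match the excluded primes in Lemma \ref{lem:lambdafg_primitive}.

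With this observation, \eqref{eq:lambdaf_primitive} is precisely the first estimate of \cite[Lemma 2.1]{kohnen_fourier_2013} applied to $f\otimes\chi$. Likewise, \eqref{eq:lambdaf2_primitive} follows from the second estimate applied to $f\otimes\chi$, combined with the identity $|\chi(p)\lambda_f(p)|^2=|\lambda_f(p)|^2$, which holds for every $p\nmid q$. In both cases no new analytic input is required; the content is entirely carried by the cited result.

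For \eqref{eq:lambdafg_primitive}, the hypothesis $g\neq f\otimes\chi$ says that $g$ and $f\otimes\chi$ are two distinct primitive forms. Applying the third estimate of \cite[Lemma 2.1]{kohnen_fourier_2013} to the pair $(f\otimes\chi,g)$ yields
\[
\sum_{\substack{p\le x\\ p\nmid M_f q^2 M_g}}\frac{\chi(p)\lambda_f(p)\overline{\lambda_g(p)}}{p}=O(1).
\]
Since Lemma \ref{lem:lambdafg_primitive} excludes only primes dividing $M_f q$, the discrepancy between the two sums is a finite sum over the primes $p\mid M_g$ with $p\nmid M_f q$; by Deligne's bound $|\lambda_f(p)|,|\lambda_g(p)|\le 2$, each such term is bounded and their total contribution is $O(1)$. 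The only point requiring attention is that $g$ and $f\otimes\chi$ really are distinct primitive forms, so that the cited lemma applies in its stated form; this is automatic, since distinct primitive forms have distinct $q$-expansions, so the hypothesis $g\neq f\otimes\chi$ is equivalent to their being distinct Hecke eigenforms. The lemma is therefore essentially a transcription of the untwisted statement to the twisted setting, and there is no substantive obstacle beyond this bookkeeping.
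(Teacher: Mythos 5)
Your proposal is correct and follows exactly the route the paper intends: the paper offers no written proof, stating only that the lemma ``follows immediately by replacing $f$ by $f\otimes\chi$ in \cite[Lemma 2.1]{kohnen_fourier_2013},'' and your argument fills in precisely that reduction, including the key observation (recorded in the paper before Lemma \ref{lem:f*}) that $f\otimes\chi\in H_k^*(M_fq^2,\psi_f\chi^2)$ is primitive when $\gcd(q,M_f)=1$. The bookkeeping about the excluded primes and the use of Deligne's bound to absorb the finitely many discrepant terms is exactly the right way to make the paper's one-line remark rigorous.
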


Although a primitive form $ f $ twisted with the character $\chi $ may not be primitive in general, we are still able to make use of \cite[Lemma 2.1]{kohnen_fourier_2013} by taking $ f^{\star} $ instead of $ f\otimes \chi $ from Remark \ref{re:inducedprimitiveforms}. 

\begin{lem}\label{lem:lambda_estimation_real}
	Let $f\in  H_{k}^{*}(M_{f},\psi_{f})$ and $ g\in H_{k}^{*}(M_{g},\psi_{g})$ whose $ n$-th coefficients are $ \lambda_{f}(n)n^{(k-1)/2} $ and  $ \lambda_{g}(n)n^{(k-1)/2} $. Let $ \chi$ be a primitive real character modulo $ q $ and $ f^{\star}$ the primitive form induced by $ f\otimes \chi $. 
\begin{enumerate}[leftmargin=*,align=left,label={\rm(\arabic*)}]
\item
As $ x\to \infty $, we have 
\begin{equation}\label{eq:estimation_lambdaf_real}
\begin{aligned}
\sum\limits_{\substack{p\le x\\ p\nmid M_{f}q  \\ \chi(p)=\pm 1}} \dfrac{\lambda_{f}(p)}{p}=O(1).
\end{aligned}
\end{equation}
\item If $f$ does not have CM by $\chi$, then as $x\to\infty$, we have 
 	\begin{align}\label{eq:estimation_lambdaf2_realnonCM}
 	\sum\limits_{\substack{p\le x\\ p\nmid M_{f}q  \\ \chi(p)=\pm 1}} \dfrac{\lambda_{f}(p)\overline{\lambda_{f}(p)}}{p}&=\dfrac{1}{2}\log\log x+O(1),\\
\label{eq:estimation_lambdaffstar_real}
	\sum\limits_{\substack{p\le x\\ p\nmid M_{f}q  \\ \chi(p)=\pm 1}} \dfrac{\lambda_{f}(p)\overline{\lambda_{f^{\star}}(p)}}{p}&=\pm\dfrac{1}{2}\log\log x+O(1).
	\end{align}
If $f$ has CM by $\chi$, then as $ x\to\infty $, we have
\begin{equation}\label{eq:CM}
\sum\limits_{\substack{p\le x\\ p\nmid M_{f}q  \\ \chi(p)=\pm 1}} \dfrac{\lambda_{f}(p)\overline{\lambda_{f}(p)}}{p}=	\sum\limits_{\substack{p\le x\\ p\nmid M_{f}q  \\ \chi(p)=\pm 1}} \dfrac{\lambda_{f}(p)\overline{\lambda_{f^{\star}}(p)}}{p}=\frac{1\pm 1}{2} \log\log x+O(1).
\end{equation}
If $ g\not=f $ and $ g\neq f^{\star} $, then as $ x\to\infty $, we have
		\begin{equation}\label{eq:estimation_lambdafg_real}
	\begin{aligned}
	\sum\limits_{\substack{p\le x\\ p\nmid M_{f}q  \\ \chi(p)=\pm 1}} \dfrac{\lambda_{f}(p)\overline{\lambda_{g}(p)}}{p}=O(1).
	\end{aligned}
	\end{equation}	
\item Suppose that there exists an odd prime $\ell\mid q$ such that $\ell\nmid M_f$. Then as $x\to\infty$, we have 
 	\begin{equation}\label{eq:estimation_lambdaf2_real}
 	\begin{aligned}
 	\sum\limits_{\substack{p\le x\\ p\nmid M_{f}q  \\ \chi(p)=\pm 1}} \dfrac{\lambda_{f}(p)\overline{\lambda_{f}(p)}}{p}=\dfrac{1}{2}\log\log x+O(1).
 	\end{aligned}
 	\end{equation}
Moreover, if $ g\neq f$ and $\ell^2\nmid M_g$, then as $ x\to\infty $, we have
	 \begin{equation}\label{eq:estimation_lambdafg_primitive}
	 \begin{aligned}
	 \sum\limits_{\substack{p\le x\\ p\nmid M_{f}q  \\ \chi(p)=\pm 1}} \dfrac{\lambda_{f}(p)\overline{\lambda_{g}(p)}}{p}=O(1).
	 \end{aligned}
	 \end{equation}

\end{enumerate}
	The implied constants in \eqref{eq:estimation_lambdaf_real}, \eqref{eq:estimation_lambdaf2_real} and \eqref{eq:estimation_lambdaffstar_real} depend on the form $ f $ and the character $ \chi $, and that in \eqref{eq:estimation_lambdafg_real} depends on the forms $ f,g $ and the character $ \chi $.

\end{lem}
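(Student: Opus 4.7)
The plan is to reduce every estimate in the lemma to Lemma \ref{lem:lambdafg_primitive} via the orthogonality identity
\[
\mathbf{1}_{\chi(p) = \pm 1} = \frac{1 \pm \chi(p)}{2} \qquad (p \nmid q),
\]
which splits each restricted sum into an unweighted piece (handled by Lemma \ref{lem:lambdafg_primitive} with the trivial character) and a $\chi$-weighted piece (handled by Lemma \ref{lem:lambdafg_primitive} with the given $\chi$). Throughout we repeatedly use the key identification
\[
\lambda_{f^\star}(p) = \lambda_{f \otimes \chi}(p) = \chi(p)\lambda_f(p) \qquad (p \nmid M_f q),
\]
together with $\chi(p)^2 = 1$ for $p \nmid q$ (as $\chi$ is real).

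For (1), the split writes the target as an average of two sums, each $O(1)$ by \eqref{eq:lambdaf_primitive}. In the non-CM case of (2), splitting $\sum |\lambda_f(p)|^2/p$ gives first term $\tfrac{1}{2}\log\log x + O(1)$ by \eqref{eq:lambdaf2_primitive} and second term $O(1)$ by \eqref{eq:lambdafg_primitive} applied to the pair $(f,f)$, which is legitimate because $f \neq f\otimes\chi$ when $f$ has no CM by $\chi$. For \eqref{eq:estimation_lambdaffstar_real}, substituting $\overline{\lambda_{f^\star}(p)} = \chi(p)\overline{\lambda_f(p)}$ rewrites the sum as $\sum \chi(p)|\lambda_f(p)|^2/p$ over $\chi(p)=\pm 1$, and the same split then gives first term $\pm(\tfrac{1}{2}\log\log x + O(1))$ and second term $O(1)$. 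In the CM case $f = f^\star$, the relation $\lambda_f(p) = \chi(p)\lambda_f(p)$ forces $\lambda_f(p) = 0$ whenever $\chi(p) = -1$, so the two sums coincide and reduce to $\sum_{\chi(p) = +1}|\lambda_f(p)|^2/p$, which vanishes in the $-$ case and equals $\log\log x + O(1)$ in the $+$ case, matching $\tfrac{1 \pm 1}{2}\log\log x + O(1)$. Finally, for \eqref{eq:estimation_lambdafg_real} with $g \neq f, f^\star$, both halves of the split of $\sum \lambda_f(p)\overline{\lambda_g(p)}/p$ are $O(1)$ by \eqref{eq:lambdafg_primitive}: the unweighted half because $g \neq f$, and the $\chi$-weighted half because $g \neq f^\star$ means the primitive form attached to $f \otimes \chi$ is not $g$.

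Part (3) is then an immediate consequence of (2): Lemma \ref{lem:f*}, applied with the odd prime $\ell \mid q$, $\ell \nmid M_f$, forces $\ell^2$ into the level of $f^\star$, hence $f \neq f^\star$ and $f$ has no CM by $\chi$, so \eqref{eq:estimation_lambdaf2_real} reduces to \eqref{eq:estimation_lambdaf2_realnonCM}; under the additional hypothesis $\ell^2 \nmid M_g$, the same lemma gives $g \neq f^\star$, so \eqref{eq:estimation_lambdafg_primitive} reduces to \eqref{eq:estimation_lambdafg_real}. The only real obstacle is the bookkeeping: one must carefully track the excluded primes $p \mid M_f q$ and verify that the identification $\lambda_{f^\star}(p) = \chi(p)\lambda_f(p)$ (which a priori holds only at primes coprime to $M_f q$, not to the level $M_{f^\star}$) suffices for all the required applications of Lemma \ref{lem:lambdafg_primitive}. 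No new analytic input beyond that lemma is needed.
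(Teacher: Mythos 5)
Your proposal is correct and follows essentially the same route as the paper: the splitting $\mathbf{1}_{\chi(p)=\pm 1}=\tfrac{1\pm\chi(p)}{2}$, the identification $\lambda_{f^{\star}}(p)=\chi(p)\lambda_{f}(p)$ for $p\nmid M_{f}q$ feeding into Lemma \ref{lem:lambdafg_primitive}, and the reduction of part (3) to part (2) via Lemma \ref{lem:f*}. The only (immaterial) differences are presentational, e.g.\ you argue the CM case by noting $\lambda_f(p)=0$ at primes with $\chi(p)=-1$ rather than by tracking which of the two split terms contributes $\log\log x$.
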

\begin{proof}
(1) By Remark \ref{re:inducedprimitiveforms}, we have  $\chi(p)\lambda_{ f}(p)=\lambda_{f^{\star}}(p) $ for any prime $ p\nmid M_{f}q $. 
 	 For \eqref{eq:estimation_lambdaf_real}, by Lemma \ref{lem:lambdafg_primitive} \eqref{eq:lambdaf_primitive}, we have
 	 \begin{align*}
 	 2\sum\limits_{\substack{p\le x\\p\nmid M_{f}q \\ \chi(p)=\pm 1}}\dfrac{ \lambda_{f}(p)}{p}
 	 &=\sum\limits_{\substack{p\le x\\p\nmid M_{f}q}}\dfrac{ \lambda_{f}(p)}{p}\pm\sum\limits_{\substack{p\le x\\p\nmid M_{f}q}}\dfrac{\chi(p) \lambda_{f}(p)}{p}=\sum\limits_{\substack{p\le x\\p\nmid M_{f}q}}\dfrac{ \lambda_{f}(p)}{p}\pm\sum\limits_{\substack{p\le x\\p\nmid M_{f}q}}\dfrac{\lambda_{f^{\star}}(p)}{p}=O(1).
 	 \end{align*}
(2) It is not difficult to see the relation
	\begin{equation}
	\begin{aligned}\label{eq:estimate_twosums}
	2\sum\limits_{\substack{p\le x\\p\nmid M_{f}q \\ \chi(p)=\pm 1}}\dfrac{ \lambda_{f}(p)\overline{\lambda_{g}(p)}}{p}
	&=\sum\limits_{\substack{p\le x\\p\nmid M_{f}q}}\dfrac{ \lambda_{f}(p)\overline{\lambda_{g}(p)}}{p}\pm\sum\limits_{\substack{p\le x\\p\nmid M_{f}q}}\dfrac{\chi(p) \lambda_{f}(p)\overline{\lambda_{g}(p)}}{p}\\
	&=\sum\limits_{\substack{p\le x\\p\nmid M_{f}q}}\dfrac{ \lambda_{f}(p)\overline{\lambda_{g}(p)}}{p}\pm\sum\limits_{\substack{p\le x\\p\nmid M_{f}q}}\dfrac{\lambda_{f^{\star}}(p)\overline{\lambda_{g}(p)}}{p}
	\end{aligned} 
	\end{equation}
and the forms $ f $, $ f^{\star}=f_{\chi}^{\star}$ and $ g $ are primitive. By Lemma \ref{lem:lambdafg_primitive}, the first term in \eqref{eq:estimate_twosums} contributes $ \log\log x +O(1)$ if $g=f$ and $O(1)$ otherwise, while the second term contributes $\pm  \log \log x+O(1)$ if $g=f^*$ and $O(1)$ otherwise. 

For $g=f$ with $f$ not CM by $\chi$, the first term in \eqref{eq:estimate_twosums} thus contributes $\log\log x+O(1)$ and the second contributes $O(1)$, giving \eqref{eq:estimation_lambdaf2_realnonCM}.

For $g=f^{\star}$, the second term in \eqref{eq:estimate_twosums} always contributes $\pm \log \log x +O(1)$ and the first term contributes $\log \log x +O(1)$ if and only if $f$ has CM by $\chi$ (and $O(1)$ otherwise), giving \eqref{eq:estimation_lambdaffstar_real} and \eqref{eq:CM}. 

If $g\neq f$ and $g\neq f^{\star}$, then both terms in \eqref{eq:estimate_twosums} contribute $O(1)$, and we hence obtain \eqref{eq:estimation_lambdafg_real}.

\noindent
(3) By Lemma \ref{lem:f*}, $\ell\nmid M_f$ implies that $f\neq f_{\chi}^{\star}$. Since $f$ does not have CM by $\chi$, \eqref{eq:estimation_lambdaf2_realnonCM} implies  \eqref{eq:estimation_lambdaf2_real}.

Moreover, since $\ell^2\mid M_{f^{\star}}$ by Lemma \ref{lem:f*} and $\ell^2\nmid M_g$ in \eqref{eq:estimation_lambdafg_primitive}, we have $g\neq f^{\star}$, and hence \eqref{eq:estimation_lambdafg_primitive} follows immediately from \eqref{eq:estimation_lambdafg_real}.
\end{proof}

\section{Coefficients of arbitrary cusp forms and the proof of Theorem \ref{thm:signchange}}\label{sec:SignChangeThm}

Suppose that $k,L\in\N$ and $\psi$ is a character with conductor $L_{\psi}\mid L$. Writing 
\[
f\big|V_{\ell}(z):=f(\ell z),
\]
in \cite{atkin_hecke_1970}, A. Atkin and J. Lehner obtain the well-known decomposition
\begin{equation}\label{eq:decomposition}
\begin{aligned}
S_{k}(L,\psi)=\bigoplus_{\substack{M\mid L\\ L_{\psi}\mid M}}\bigoplus_{f\in H_{k}^{*}(M,\psi)}\Span_{\mathbb{C}}\left\{f\big|V_{\ell}: \ell \mid (L/M)\right\}.
	\end{aligned}
\end{equation}

\begin{lem}\label{lem:afestimation}
	Let $ k\ge 1  $ be an integer, $ N\ge 4 $ an integer divisible by $ 4 $ and $ \psi  $ a Dirichlet character modulo $ N $. Let $\chi$ be a primitive real character modulo $ q $ and $ \varepsilon\in\{\pm 1\} $. Suppose that $ \mathfrak{f}\in S_{k+1/2}^{*}(N,\psi) $ and $ t\ge 1 $ is a squarefree integer such that $ \mathfrak{a}_{\mathfrak{f}}(t)\not=0 $. Assume that the sequence $ \{\mathfrak{a}_{\mathfrak{f}}(tn^{2})\}_{n\in\mathbb{N}} $ is real. 
\begin{enumerate}[leftmargin=*,align=left,label={\rm(\arabic*)}]
\item
Then as $ x\to \infty $, we have 
	\begin{equation}\label{eq:mathfrakafestimation}
	\sum\limits_{\substack{p\le x\\ p\nmid q  \\ \chi(p)=\pm 1}} \dfrac{\mathfrak{a}_{\mathfrak{f}}(tp^{2})}{p^{k+1/2}}=O_{\mathfrak{f},t,\chi}(1).
	\end{equation}
\item	If there exists an odd prime $r\mid q$ such that $r\nmid N$, then as $ x\to \infty $, we have
	\begin{equation}\label{eq:mathfrakafsquare}
	\sum\limits_{\substack{p\le x\\ p\nmid q\\ \chi(p)=\varepsilon}} \dfrac{\mathfrak{a}_{\mathfrak{f}}(tp^{2})^{2}}{p^{2k}}=C\log\log x+O(1)
	\end{equation}	
	holds for some $C>0$ for both $ \varepsilon=1 $ and $ \varepsilon=-1 $. 
\item Suppose that every prime divisor of $q$ divides $N$. The equality \eqref{eq:mathfrakafsquare} holds with $C>0$ unless the $ t $-th Shimura correspondence $ f_{t} $ satisfies
\begin{equation}\label{eqn:ftbad}
f_t\otimes \chi_{N^2} = \sum_{i} c_i\left(f_i-\varepsilon f_i^{\star}\right)\otimes \chi_{N^2},
\end{equation}
where $f_i$ run through all of the primitive forms of level dividing $N$ and $c_i\in\C$. If \eqref{eqn:ftbad} holds, then $C=0$ and moreover $a_{f_t}(p)=0$ for every prime $p\nmid N$ with $\chi(p)=\varepsilon$.

\end{enumerate}

The implied constants $ C$ and those occurring in the $ O $-symbols depend on $ \mathfrak{f} $, $ t $, $ \chi $ and $ \varepsilon $.
\end{lem}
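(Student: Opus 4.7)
The plan is to reduce the half-integral weight sums to integral-weight sums via the Shimura relation \eqref{eq:Shimuraliftcoeff}, expand $f_t$ through the Atkin--Lehner decomposition \eqref{eq:decomposition}, and apply Lemma~\ref{lem:lambda_estimation_real} termwise. Writing
\[
\mathfrak{a}_{\mathfrak{f}}\bigl(tp^2\bigr) = a_{f_t}(p) - \psi_{t,N}(p)\,p^{k-1}\,\mathfrak{a}_{\mathfrak{f}}(t)
\]
and $f_t = \sum_i c_i f_i|V_{\ell_i}$ with $f_i$ primitive of level $M_i \mid N/2$, I note that only the summands with $\ell_i = 1$ have a nonzero $p$-th Fourier coefficient at primes $p \nmid N$, so $a_{f_t}(p) = \sum_{i:\,\ell_i=1} c_i\, a_{f_i}(p)$ on such primes. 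Dividing by $p^{k+1/2}$, the correction term in (1) becomes $O(p^{-3/2})$, which is absolutely summable, while the main piece reduces to $\sum_i c_i \sum_p \lambda_{f_i}(p)/p$, which is $O(1)$ by \eqref{eq:estimation_lambdaf_real}. This finishes part (1).

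For parts (2) and (3) I square the Shimura relation. Deligne's bound $|\lambda_{f_i}(p)| \leq 2$ gives a uniform bound $|\lambda_{f_t}(p)| = O(1)$, so the cross term $\lambda_{f_t}(p)\psi_{t,N}(p)\mathfrak{a}_{\mathfrak{f}}(t)/p^{3/2}$ is $O(p^{-3/2})$ and absolutely summable, and the squared correction $(\psi_{t,N}(p)\mathfrak{a}_{\mathfrak{f}}(t))^2/p^2$ is $O(p^{-2})$. The problem reduces to
\[
\sum_{\substack{p\leq x\\ p\nmid qN\\ \chi(p)=\varepsilon}} \frac{|\lambda_{f_t}(p)|^2}{p} = \sum_{i,j} c_i \overline{c_j} \sum_{\substack{p\leq x\\ p\nmid qN\\ \chi(p)=\varepsilon}} \frac{\lambda_{f_i}(p)\,\overline{\lambda_{f_j}(p)}}{p}
\]
(the finitely many primes with $p\mid N$ are absorbed into $O(1)$). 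For part (2), the hypothesis that some odd $r\mid q$ has $r\nmid N$, combined with Lemma~\ref{lem:f*}, forces $r^2\mid M_{f_i^\star}$ for each $i$, while every $f_j$ in the decomposition has level dividing $N$ and thus $r\nmid M_{f_j}$; in particular $f_j\neq f_i^\star$ for all $i,j$ and no $f_i$ has CM by $\chi$. Lemma~\ref{lem:lambda_estimation_real}(3) then yields $\tfrac{1}{2}\log\log x+O(1)$ on each diagonal and $O(1)$ off-diagonal, giving $C=\tfrac{1}{2}\sum_i|c_i|^2$; this is positive because $a_{f_t}(1)=\mathfrak{a}_{\mathfrak{f}}(t)\neq 0$ forces some $c_i\neq 0$.

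For part (3) I apply Lemma~\ref{lem:lambda_estimation_real}(2) to each $(i,j)$ and organize the result by the $\chi$-twisting involution $f \mapsto f_\chi^\star$. Each non-CM twist-pair $\{f_i, f_j = f_i^\star\}$ that fully appears in the decomposition contributes $\tfrac{1}{2}|c_i+\varepsilon c_j|^2\log\log x$, each non-CM $f_i$ with $f_i^\star$ absent contributes $\tfrac{1}{2}|c_i|^2\log\log x$, and each CM $f_i$ contributes $\tfrac{1+\varepsilon}{2}|c_i|^2\log\log x$. All three are nonnegative, so $C\geq 0$, and $C=0$ holds precisely when every non-CM pair satisfies $c_j=-\varepsilon c_i$, every twist-singleton has $c_i=0$, and every CM form has $c_i=0$ when $\varepsilon=+1$ (while CM coefficients are unconstrained for $\varepsilon=-1$). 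These conditions are equivalent to the rewriting $f_t = \sum_i c_i(f_i - \varepsilon f_i^\star)$ (combining each non-CM pair into a single summand and, for $\varepsilon=-1$, absorbing CM terms via $c_i f_i = \tfrac{c_i}{2}(f_i - (-1)f_i^\star)$); after twisting both sides by $\chi_{N^2}$, which annihilates coefficients at primes dividing $N$, this is exactly \eqref{eqn:ftbad}. Finally, under this identity and for $p\nmid N$ with $\chi(p)=\varepsilon$, Remark~\ref{re:inducedprimitiveforms} gives $\lambda_{f_i^\star}(p) = \chi(p)\lambda_{f_i}(p) = \varepsilon\,\lambda_{f_i}(p)$, so each summand $f_i-\varepsilon f_i^\star$ has vanishing $p$-th coefficient, showing $a_{f_t}(p)=0$.

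The principal obstacle is the structural bookkeeping in part (3): matching the algebraic vanishing of $C$ with the decomposition identity \eqref{eqn:ftbad} requires simultaneously handling CM forms (where $f_i^\star=f_i$ and the two signs of $\varepsilon$ behave oppositely), non-CM twist pairs (with the correct $-\varepsilon$ sign between the paired coefficients), and the possibility that $f_i^\star$ may itself have level exceeding $N$, so that only its coefficients at primes coprime to $N$ are visible in the comparison.
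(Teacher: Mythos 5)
Your proposal is correct and follows essentially the same route as the paper: the Shimura relation plus Atkin--Lehner decomposition reduces everything to the convolution sums of Lemma \ref{lem:lambda_estimation_real}, and your grouping of $C$ into nonnegative contributions from twist-singletons, non-CM pairs $\frac{1}{2}|c_i+\varepsilon c_j|^2$, and CM forms $\frac{1+\varepsilon}{2}|c_i|^2$ is exactly the paper's completion-of-squares in \eqref{eqn:Ceval2}, with the same case analysis for the equivalence with \eqref{eqn:ftbad} and the same final verification that $a_{f_t}(p)=0$.
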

\begin{proof}
	 Applying the M\"obius inversion formula to \eqref{eq:relationaft_af}, we have
\[
\mathfrak{a}_{\mathfrak{f}}(tn^{2})=\sum\limits_{d\mid n}\mu(d)\psi_{t,N}(d)d^{k-1}a_{f_{t}}\left(\dfrac{n}{d}\right),
\]
	 where $ a_{f_{t}}(n) $ is the $ n $-th coefficient of $ f_{t} $. Write $ a_{f_{t}}(n)=\lambda_{f_{t}}(n)n^{k-1/2} $. Then we may rewrite the above formula as 
	 

\[ 	
\dfrac{\mathfrak{a}_{\mathfrak{f}}(tn^{2})}{n^{k-1/2}}=\sum\limits_{d\mid n}\dfrac{\mu (d)\psi_{t,N}(d)}{\sqrt{d}}\lambda_{f_{t}}\left(\dfrac{n}{d}\right).
	 \]
	 Considering the special case that $ n=p $ is a prime and noting that $ \lambda_{f_{t}}(1)=\mathfrak{a}_{\mathfrak{f}}(t) $ yields
	 \begin{equation}\label{eq:af_lambdaft}
	 \dfrac{\mathfrak{a}_{\mathfrak{f}}(tp^{2})}{p^{k-1/2}}=\lambda_{f_{t}}(p)-\dfrac{\psi_{t,N}(p)}{\sqrt{p}}\mathfrak{a}_{\mathfrak{f}}(t).
	 \end{equation}
	 Applying the decomposition \eqref{eq:decomposition} to $ S_{2k}(N/2,\psi^{2}) $, we obtain a basis
	 \begin{align*}
	 \bigcup\limits_{\substack{M\mid (N/2)\\ L_{\psi^2}\mid M}}\left\{f\big|V_{\ell}:\ell\mid \dfrac{N/2}{M} ,f\in H_{2k}^{*}(M,\psi^{2})\right\}.
	 \end{align*}
 Hence $ f_t\in S_{2k}(N/2,\psi^{2}) $ can be written as 
\begin{equation}\label{eq:fdecomp}
	 f_t(z)=\sum\limits_{i}\sum\limits_{\ell\mid (N/(2M_{i}))}c_{i,\ell}f_{i}(\ell z),
\end{equation}
	 where $ f_{i}\in H_{2k}^{*}(M_{i},(\psi^{2})_{M_{i}}) $ is primitive of level $ M_{i} $ and the $ c_{i,\ell} $ are scalars depending on $ f $.
	 
	 
	 For any prime $ p\nmid Nq $, the terms with $\ell\neq 1$ do not contribute anything to the $p$-th Fourier coefficient, so, comparing coefficients of the functions on each side of \eqref{eq:fdecomp}, we see that
	\begin{align*}
	\lambda_{f_{t}}(p)=\sum\limits_{i}c_{i}\lambda_{f_{i}}(p),
	\end{align*}
	where $ c_{i}:=c_{i,1} $. Moreover, since 
\[
\sum\limits_{i}c_{i}=\lambda_{f_t}(1)=\mathfrak{a}_{\mathfrak{f}}(t)\neq 0
\]
by assumption, not all the $ c_{i} $ are zero. Expressing $ \lambda_{f_{t}} $ by the linear combination of $ \lambda_{f_{i}} $ in \eqref{eq:af_lambdaft}, we have
	\begin{equation}\label{eq:af_lambdafi}
	\dfrac{\mathfrak{a}_{\mathfrak{f}}(tp^{2})}{p^{k-1/2}}=\sum\limits_{i}c_{i}\lambda_{f_{i}}(p)-\dfrac{\psi_{t,N}(p)}{\sqrt{p}}\mathfrak{a}_{\mathfrak{f}}(t).
	\end{equation}
\noindent

\noindent 
(1)
	Dividing \eqref{eq:af_lambdafi} by $ p $ and summing over $ p\le x $ with $ \chi(p)=\pm 1 $ but $ p\nmid Nq $ on \eqref{eq:af_lambdafi} and then applying Lemma \ref{lem:lambda_estimation_real} \eqref{eq:estimation_lambdaf_real} to $ f_{i} $ in \eqref{eq:af_lambdafi}, we deduce that
	\begin{align*}
	\sum\limits_{\substack{p\le x\\ p\nmid Nq\\ \chi(p)=\pm 1}}\dfrac{\mathfrak{a}_{\mathfrak{f}}(tp^{2})}{p^{k+1/2}}=\sum\limits_{i}c_{i}\sum\limits_{\substack{p\le x\\p\nmid Nq\\ \chi(p)=\pm 1}}\dfrac{\lambda_{f_{i}}(p)}{p}-\mathfrak{a}_{\mathfrak{f}}(t)\sum\limits_{\substack{p\le x\\p\nmid Nq\\ \chi(p)=\pm 1}}\dfrac{\psi_{t,N}(p)}{p^{3/2}}=O_{\mathfrak{f},t,\chi}(1),
	\end{align*}
	thereby showing \eqref{eq:mathfrakafestimation}.
\noindent

\noindent
(2) Since $\mathfrak{a}_{\mathfrak{f}}(tn^{2})\in \R$, multiplying \eqref{eq:af_lambdafi} by it its complex conjugate yields
	\begin{multline}\label{eq:afsquare}
	\dfrac{\mathfrak{a}_{\mathfrak{f}}(tp^{2})^{2}}{p^{2k-1}}=\sum\limits_{i}|c_{i}|^{2}|\lambda_{f_{i}}(p)|^{2}+\sum\limits_{i\not=j}c_{i}\overline{c_{j}}\lambda_{f_{i}}(p)\overline{\lambda_{f_{j}}(p)} \\
	+\mathfrak{a}_{\mathfrak{f}}(t)^{2}\dfrac{|\psi_{t,N}(p)|^{2}}{p}-2\Re \left( \sum\limits_{i}\dfrac{\overline{c_{i}\lambda_{f_{i}}(p)}\psi_{t,N}(p)}{\sqrt{p}}\mathfrak{a}_{\mathfrak{f}}(t)\right).
	\end{multline}
 Dividing \eqref{eq:afsquare} by $ p $ and summing over $p\leq x$ with $\chi(p)=\pm 1$ but $ p\nmid Nq $ yields
	\begin{multline}\label{eqn:sumabssquared}
	\sum\limits_{\substack{p\le x\\p\nmid Nq\\ \chi(p)=\pm 1}}\dfrac{\mathfrak{a}_{\mathfrak{f}}(tp^{2})^{2}}{p^{2k}}=\sum\limits_{i}|c_{i}|^{2}\sum\limits_{\substack{p\le x\\p\nmid Nq\\ \chi(p)=\pm 1}}\dfrac{|\lambda_{f_{i}}(p)|^{2}}{p}+\sum\limits_{i\not=j}c_{i}\overline{c_{j}}\sum\limits_{\substack{p\le x\\p\nmid Nq\\ \chi(p)=\pm 1}}\dfrac{\lambda_{f_{i}}(p)\overline{\lambda_{f_{j}}(p)}}{p}\\
	+\mathfrak{a}_{\mathfrak{f}}(t)^{2}\sum\limits_{\substack{p\le x\\p\nmid Nq\\ \chi(p)=\pm 1}}\dfrac{|\psi_{t,N}(p)|^{2}}{p^{2}}-2 \sum\limits_{\substack{p\le x\\p\nmid Nq\\ \chi(p)=\pm 1}}\Re \left( \sum\limits_{i}\dfrac{\overline{c_{i}\lambda_{f_{i}}(p)}\psi_{t,N}(p)}{p^{3/2}}\mathfrak{a}_{\mathfrak{f}}(t)\right).
	\end{multline}	
The last two terms are $ O_{\mathfrak{f},t,\chi}(1) $ from the fact that the sum $ \sum_{i}|\lambda_{f_{i}}(p)| $ is bounded \cite[see Corollary 5.2]{iwaniec_topic_1997}. Now consider the sum of the first and second terms. 
	
	Define the index sets $
	I^{\star}:=\{i: f_{i}\;\text{has CM by}\;\chi\}$ and $
	J:=\{(i,j): i\not=j\;\text{and}\; f_{j}=f_{i}^{\star}\}$.
Write 
	\begin{align*}
		S_{1}:=\sum\limits_{i\not\in I^{\star}}|c_{i}|^{2} \qquad S_{1}^{\star}:=\sum\limits_{i\in I^{\star}}|c_{i}|^{2} \qquad
		S_{2}:=\sum\limits_{(i,j)\in J}c_{i}\overline{c_{j}}.
	\end{align*}
Clearly, not both $ S_{1} $ and $ S_{1}^{\star} $ are zero. For the first term in \eqref{eqn:sumabssquared}, we use \eqref{eq:estimation_lambdaf2_realnonCM} and \eqref{eq:CM} from Lemma \ref{lem:lambda_estimation_real} to obtain
	\begin{equation}\label{eq:estimation_fi2}
	\begin{aligned}
	\sum\limits_{i}|c_{i}|^{2}\sum\limits_{\substack{p\le x\\p\nmid Nq\\ \chi(p)=\varepsilon}}\dfrac{|\lambda_{f_{i}}(p)|^{2}}{p}=\dfrac{S_{1}+(\varepsilon+1) S_{1}^{\star}}{2}\log\log x+O_{\mathfrak{f},t,\chi}(1).
	\end{aligned}
	\end{equation}
For the second term in \eqref{eqn:sumabssquared}, since $ f_{i}\not=f_{j} $ for $ i\not=j $, \eqref{eq:estimation_lambdaffstar_real} implies
	\begin{equation}\label{eq:estimation_fifj}
	\begin{aligned}
	\sum\limits_{(i,j)\in J}c_{i}\overline{c_{j}}\sum\limits_{\substack{p\le x\\p\nmid Nq\\ \chi(p)=\varepsilon}}\dfrac{\lambda_{f_{i}}(p)\overline{\lambda_{f_{j}}(p)}}{p}=\dfrac{\varepsilon S_{2}}{2}\log\log x+O_{\mathfrak{f},t,\chi}(1).
	\end{aligned}
	\end{equation}
If $i\neq j$ and $(i,j)\notin J$, then $f_j\neq f_i$ and $f_j\neq f_i^{\star}$ (if $f_i$ has CM by $\chi$, then $f_i^{\star}=f_i$, so $f_j\neq f_i$ implies that $f_j\neq f_i^{\star}$). Hence for the remaining terms in \eqref{eqn:sumabssquared} we may use \eqref{eq:estimation_lambdafg_real} to obtain
	\begin{equation}\label{eq:estimation_fifj2}
	\begin{aligned}
	\sum\limits_{\substack{i\neq j\\ (i,j)\notin J}}c_{i}\overline{c_{j}}\sum\limits_{\substack{p\le x\\p\nmid Nq\\ \chi(p)=\varepsilon}}\dfrac{\lambda_{f_{i}}(p)\overline{\lambda_{f_{j}}(p)}}{p}=O_{\mathfrak{f},t,\chi}(1).
	\end{aligned}
	\end{equation}
Combining \eqref{eq:estimation_fi2}, \eqref{eq:estimation_fifj}, and \eqref{eq:estimation_fifj2}, the sum of the first and second terms in \eqref{eqn:sumabssquared} is given by
	\begin{equation}\label{eq:sum_firstsecond}
	\dfrac{S_{1}+\varepsilon S_{2}+(\varepsilon+1)S_{1}^{\star}}{2}\log\log x+O_{\mathfrak{f},t,\chi}(1).
	\end{equation}
	If there exists an odd prime $r\mid q$ such that $r\nmid N$, then Lemma \ref{lem:f*} implies that $r^2\mid M_{f_i^{\star}}$. Since $r^2\nmid N$ and $M_{j}\mid N$ for every $j$ (including $j=i$), $f_{i}^{\star}\neq f_j$ for every $j$ and hence $ I^{\star}=J=\emptyset$ and $S_2= S_{1}^{\star}=0 $ in this case. Since $S_1+S_1^*>0$, we furthermore obtain that $ S_{1}>0 $. Hence \eqref{eq:sum_firstsecond} becomes 
\[
\frac{S_{1}}{2}\log\log x+O_{\mathfrak{f},t,\chi}(1)
\]
with $S_1>0$. This yields the claim for the case that such an odd prime $\ell$ exists.
\noindent

\noindent
(3) We write 
\[
S_1+\varepsilon S_2+(\varepsilon+1)S_1^{\star}=:Ce^{i\theta}
\]
with $C\geq 0$ and $-\pi<\theta\leq \pi$ and note that since the left-hand side of \eqref{eq:mathfrakafsquare} is a sum of nonnegative real numbers, if $C\neq 0$ then we must have $\theta=0$ (otherwise the limit in \eqref{eq:sum_firstsecond} would diverge to $+e^{i\theta}\infty$ as $x\to\infty$ while for each $x$ it equals a nonnegative real number, a contradiction).

Writing $a_i:=\Re(c_i)$ and $b_i:=\im(c_i)$, we conclude that 
\[
S_1+\varepsilon S_2+(\varepsilon+1)S_1^{\star}=\Re\left(S_1+\varepsilon S_2+(\varepsilon+1)S_1^{\star}\right)
\]
 and hence
\begin{equation}\label{eqn:Ceval1}
C=S_1+\varepsilon S_2+(\varepsilon+1)S_1^{\star}=\sum_{i\notin I^{\star}} \left(a_i^2+b_i^2\right) +\varepsilon  \sum_{(i,j)\in J} \left(a_ia_j + b_ib_j\right) + \left(\varepsilon+1\right) \sum_{i\in I^{\star}}\left(a_i^2+b_i^2\right).
\end{equation}
Consider the set 
\[
I_J:=\{ i: \exists j\;\text{ s.t. } (i,j)\in J\}
\]
and note that if $(i,j)\in J$ then $(j,i)\in J$, but since $\left(f_i^{\star}\right)^{\star}=f_i$, the tuples in $J$ appear in pairs, and hence there does not exist $j'\neq j$ such that $(i,j')\in J$. Thus \eqref{eqn:Ceval1} becomes
\begin{align}
\nonumber
C=\sum_{i\notin \left(I^{\star}\cup I_J\right)} \left(a_i^2+b_i^2\right) + \sum_{(i,j)\in J} \frac{1}{2}\left(a_i^2+b_i^2+a_j^2+b_j^2\right)+\varepsilon \left(a_ia_j + b_ib_j\right) + \left(\varepsilon+1\right) \sum_{i\in I^{\star}}\left(a_i^2+b_i^2\right)\\
\label{eqn:Ceval2}=\sum_{i\notin \left(I^{\star}\cup I_J\right)} \left(a_i^2+b_i^2\right) + \frac{1}{2}\sum_{(i,j)\in J} \left(\left(a_i+\varepsilon a_j\right)^2 +\left(b_i+\varepsilon b_j\right)^2\right)+ \left(\varepsilon+1\right) \sum_{i\in I^{\star}}\left(a_i^2+b_i^2\right).
\end{align}
Hence we conclude that $C>0$ unless all of the following hold:
\begin{itemize}
\item If $i\notin \left(I^{\star}\cup I_J\right)$, then $c_i=0$. 
\item If $(i,j)\in J$, then  $c_i=-\varepsilon c_j$.
\item If $\varepsilon=1$, then $c_i=0$ for every $i\in I^{\star}$.
\end{itemize}
Noting that, since $\chi_{N^2}$ annihilates $f_i\big|V_{\ell}$ for every $\ell>1$, 
\[
f_t\otimes \chi_{N^2} = \sum_{i} c_i f_i\otimes \chi_{N^2}
\]
and writing $f_i=\frac{1}{2}\left(f_i+f_i^{\star}\right)$ for $i\in I^{\star}$, these three conditions are equivalent to \eqref{eqn:ftbad}. 

Finally note that if \eqref{eqn:ftbad} holds, then for $p\nmid N$ with $\chi(p)=\varepsilon$ we have 
\[
a_{f_t}(p)= \sum_{i} c_i\left(a_{f_i}(p)-\varepsilon a_{f_i^{\star}}(p)\right)=\sum_{i} c_i\left(a_{f_i}(p)-\varepsilon \chi(p)a_{f_i}(p)\right)=\sum_{i} c_i\left(a_{f_i}(p)-a_{f_i}(p)\right)=0,
\]
where we used the fact that 
\[
a_{f_i^{\star}}(p)=a_{f_i\otimes\chi}(p)=\chi(p)a_{f_i}(p).
\] 
\end{proof}

\begin{proof}[Proof of Theorem \ref{thm:signchange}]
	We claim that if 
	\begin{equation}\label{eq:aspt_aftp22}
	\sum\limits_{\substack{p\le x\\p\nmid Nq\\ \chi(p)=\varepsilon}}\dfrac{\mathfrak{a}_{\mathfrak{f}}(tp^{2})^{2}}{p^{2k}}=C_{\mathfrak{f},t,\chi,\varepsilon}\log\log x+O_{\mathfrak{f},t,\chi,\varepsilon}(1),
	\end{equation} 
	for some $ C_{\mathfrak{f},t,\chi,\varepsilon}>0 $, then the assertion is true for $ \chi(p)=\varepsilon $.
	By Deligne's bound \cite{Deligne1974}, $ |\lambda_{f_{i}}(p)|\le 2$ (as $ |a_{f_{i}}(p)|\le 2p^{k-1/2} $). Then \eqref{eq:af_lambdafi} implies
	\[
	|\mathfrak{a}_{\mathfrak{f}}(tp^{2})p^{-(k-1/2)}|\le 2\sum\limits_{i}|c_{i}|+|\mathfrak{a}_{\mathfrak{f}}(t)|=:C_{\mathfrak{f},t}.
	\]	
	Suppose that $ \mathfrak{a}_\mathfrak{f}(tp^{2}) $ are of the same sign for $ y\le p\le x $ with $ p\nmid Nq $ and $ \chi(p)=\varepsilon $. Without loss of generality, assume that $\mathfrak{a}_\mathfrak{f}(tp^{2}) >0$. Then 
	\begin{equation}\label{eq:comparision}
	\sum\limits_{\substack{y\le p\le x\\p\nmid Nq\\ \chi(p)=\varepsilon}}\dfrac{\mathfrak{a}_{\mathfrak{f}}(tp^{2})^{2}}{p^{2k}}\le C_{\mathfrak{f},t}\sum\limits_{\substack{y\le p\le x\\p\nmid Nq\\ \chi(p)=\varepsilon}}	\dfrac{\mathfrak{a}_{\mathfrak{f}}(tp^{2})}{p^{k+1/2}}.
	\end{equation}
	The left-hand side of \eqref{eq:comparision} is given by
	\[ 	
C_{\mathfrak{f},t,\chi,\varepsilon}\log \left(\dfrac{\log x}{\log y}\right)+O_{\mathfrak{f},t,\chi,\varepsilon}(1) \]
	from the assumption \eqref{eq:aspt_aftp22}. However, the right-hand side is $ O_{\mathfrak{f},t,\chi}(1) $ from Lemma \ref{lem:afestimation}  \eqref{eq:mathfrakafestimation} for all $ x\ge y\ge 2 $. It is impossible if $ y=x^{\delta} $ with a sufficiently small constant $ \delta=\delta(\mathfrak{f},t,\chi,\varepsilon)>0 $. The claim is proved. Combining the claim with \eqref{eq:mathfrakafsquare} from Lemma \ref{lem:afestimation}, we are done.
\end{proof}

\section{Spinor genera and the proof of Theorem \ref{thm:signchangedivisor}}\label{sec:signchangedivisor}

In this section, we investigate the case when every odd prime dividing the conductor of $\chi$ also divides the level of the modular form. To prove Theorem \ref{thm:signchangedivisor} (1), we require the following lemma \cite[Lemma 7.5 and 7.6]{apostol_introduction_1986} for the case when $ \psi_{t,N} $ is a real character.

\begin{lem}\label{lem:Dirichlet}
	Let $ \chi $ be a nonprincipal character. As $ x\to \infty $, we have
	\begin{align*}
	\sum\limits_{p\le x}\dfrac{\chi(p)\log p}{p}&=-L^{\prime}(1,\chi)\sum\limits_{n\le x}\dfrac{\mu(n)\chi(n)}{n}+O(1),\\	L(1,\chi)\sum\limits_{n\le x}\dfrac{\mu(n)\chi(n)}{n}&=O(1),
	\end{align*}
	where $ L(s,\chi)=\sum_{n=1}^{\infty}\chi(n)/n^{s} $ is the Dirichlet L-function.
\end{lem}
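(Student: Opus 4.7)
The plan is to prove the two identities separately, both via Dirichlet convolution combined with partial summation, following the classical arguments in \cite{apostol_introduction_1986}.

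For the second identity I would begin with the elementary convolution identity
\[
\sum_{n\leq x}\frac{\mu(n)\chi(n)}{n}\sum_{m\leq x/n}\frac{\chi(m)}{m}=\sum_{N\leq x}\frac{\chi(N)}{N}\sum_{d\mid N}\mu(d)=1,
\]
since the inner M\"obius sum vanishes unless $N=1$. Because $\chi$ is nonprincipal of modulus $q$, the P\'olya-type bound $|\sum_{m\leq y}\chi(m)|\leq q$ combined with Abel summation yields
\[
\sum_{m\leq y}\frac{\chi(m)}{m}=L(1,\chi)+O(1/y).
\]
Substituting this with $y=x/n$ and using $|\mu(n)|\leq 1$ bounds the accumulated error by $O(1)$, so that $1=L(1,\chi)\sum_{n\leq x}\mu(n)\chi(n)/n+O(1)$, which is the second assertion.

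For the first identity, I first observe that replacing $\sum_{p\leq x}\chi(p)\log p/p$ by the von Mangoldt sum $\sum_{n\leq x}\Lambda(n)\chi(n)/n$ costs only $O(1)$, since the contribution from higher prime powers $p^{k}$ ($k\geq 2$) is absolutely convergent and bounded by $\sum_{p,k\geq 2}\log p/p^{k}=O(1)$. Next I apply the convolution identity $\Lambda=\mu\ast\log$ to write
\[
\sum_{n\leq x}\frac{\Lambda(n)\chi(n)}{n}=\sum_{d\leq x}\frac{\mu(d)\chi(d)}{d}\sum_{m\leq x/d}\frac{\chi(m)\log m}{m}.
\]
A second application of partial summation (with the same character-sum bound, now weighted by $\log m$) gives $\sum_{m\leq y}\chi(m)\log m/m=-L'(1,\chi)+O(\log y/y)$. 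Plugging in $y=x/d$ produces the main term $-L'(1,\chi)\sum_{n\leq x}\mu(n)\chi(n)/n$, and the resulting error is $O\bigl(x^{-1}\sum_{d\leq x}\log(x/d)\bigr)=O(1)$, yielding the first assertion.

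The only mildly technical step is the partial-summation derivation of the two asymptotics for $\sum_{m\leq y}\chi(m)/m$ and $\sum_{m\leq y}\chi(m)\log m/m$; both reduce to Abel summation and the character-sum bound $|\sum_{m\leq y}\chi(m)|\leq q$, with the log-weighted version requiring one additional integration by parts. Once these are in hand, the rest of the argument is bookkeeping with the trivial bounds $|\mu|\leq 1$ and $|\chi|\leq 1$. Since the lemma is quoted directly from \cite[Lemmas~7.5 and~7.6]{apostol_introduction_1986}, there is no expectation of novelty here---the proposal is simply to reproduce that standard argument.
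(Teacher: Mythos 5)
Your proposal is correct and takes essentially the same route as the paper, which offers no independent argument but simply quotes \cite[Lemmas~7.5 and~7.6]{apostol_introduction_1986}; your convolution identities $\sum_{n\le x}\tfrac{\mu(n)\chi(n)}{n}\sum_{m\le x/n}\tfrac{\chi(m)}{m}=1$ and $\Lambda=\mu\ast\log$ combined with the partial-summation asymptotics for $\sum_{m\le y}\chi(m)/m$ and $\sum_{m\le y}\chi(m)\log m/m$ are precisely Apostol's proofs of those two lemmas. The only bookkeeping point to tidy is the range $x/2<d\le x$, where $y=x/d<2$ and the inner sum is exactly $0$ rather than $-L'(1,\chi)+O(\log y/y)$; there the per-term error is $O(1/d)=O(1/x)$, so the total contribution remains $O(1)$ and the conclusion is unaffected.
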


Now, we begin by showing Theorem \ref{thm:signchangedivisor} (1).

\begin{proof}[Proof of Theorem \ref{thm:signchangedivisor} (1)]
As in the proof of Theorem \ref{thm:signchange}, the form exhibits sign changes whenever \eqref{eq:aspt_aftp22} holds. Hence for $\mathfrak{f}$ such that \eqref{eqn:ftnosignchange} does not hold, Lemma \ref{lem:afestimation} implies that \eqref{eqn:primeDseq} exhibits sign changes. 

In the case that \eqref{eqn:ftnosignchange} holds, Lemma \ref{lem:afestimation} implies that $a_{f_t}(p)=0$ for every $p\nmid N$ with $\chi_D(p)=\varepsilon$. Then \eqref{eq:Shimuraliftcoeff} implies 
\[
0=a_{f_t}(p)=\psi_{t,N}(p) p^{k-1}\mathfrak{a}_{\mathfrak{f}}(t)+\mathfrak{a}_{\mathfrak{f}}\left(tp^2\right).
\]
Thus 
\[
\mathfrak{a}_{\mathfrak{f}}\left(tp^2\right)=-\psi_{t,N}(p)p^{k-1}\mathfrak{a}_{\mathfrak{f}}(t).
\]
Note that $ \mathfrak{a}_{\mathfrak{f}}(n)  $ are all real and hence $ \psi_{t,N}(p) $ is real for any prime $ p\nmid N $ with $ \chi_{D}(p)=\varepsilon $. 

 Let $ m_{1},m_{2} $ be the smallest positive periods of the characters $ \psi_{t,N} $ and $ \chi_{D} $, respectively, and $ m:=\lcm(m_{1},m_{2}) $. If $\psi_{t,N}\simeq\chi_{D}^{j}$, then $-\psi_{t,N}(p)=-\varepsilon^{j}$ for any prime $ p $ not dividing $ m $ and so for any prime $p$ with $\chi_{D}(p)=\varepsilon$ (and so $ p\nmid m_{1} $) but $ p\nmid m_{2} $. Hence we have
 \begin{align*}
 \mathfrak{a}_{\mathfrak{f}}\left(tp^2\right)=
 \begin{cases}
 0&\text{if $ \chi_{D}(p)=\varepsilon $ and $ p\mid m_{2} $}, \\
	-\varepsilon^{j} p^{k-1} \mathfrak{a}_{\mathfrak{f}}(t) &\text{if $ \chi_{D}(p)=\varepsilon $ and $ p\nmid m_{2} $}. \\
 \end{cases}
 \end{align*} 
 Thus there are no sign changes in this case.

If $ \psi_{t,N}\not\simeq\chi_{D}^{j} $ for $ j=0,1 $, suppose that the subsequence does not exhibit sign changes. Then one of the following holds:
\begin{itemize}
\item[(a)] $ \psi_{t,N}(p)=\chi_{D}(p)=\varepsilon $ for any prime $ p $ with $ \chi_{D}(p)=\varepsilon $; 

\item[(b)] $ \psi_{t,N}(p)=-\chi_{D}(p)=-\varepsilon $ for any prime $ p $ with $ \chi_{D}(p)=\varepsilon $.
\end{itemize}
\textbf{Case I:} either (a) or (b) holds with $ \varepsilon=-1 $:

Choose a prime $ p_{0} $ with $ \chi_{D}(p_{0})=-1 $ and $ p\nmid m_{2} $. For any prime $ p $ with $ \chi_{D}(p)=1 $ and $ p\nmid m_{2} $, we have $ \chi_{D}(pp_{0})=-1 $. Since $ \gcd(m,pp_{0})=1 $, by Dirichlet's theorem on arithmetic progressions, there exists some prime $ q $ such that $ q\equiv pp_{0}\pmod{m} $. Hence $ \chi_{D}(q)=-1$. If the statement (a) holds, then we have
\begin{align*}
\psi_{t,N}(pp_{0})=\psi_{t,N}(q)=-1=\chi_{D}(q)=\chi_{D}(pp_{0})
\end{align*}
and $ \psi_{t,N}(p_{0})=\chi_{D}(p_{0})=-1 $; if the statement (b) holds, then we have
\begin{align*}
\psi_{t,N}(pp_{0})=\psi_{t,N}(q)=1=-\chi_{D}(q)=-\chi_{D}(pp_{0})
\end{align*}
and $ \psi_{t,N}(p_{0})=-\chi_{D}(p_{0})=1 $. In both cases, we conclude that $ \psi_{t,N}(p)=\chi_{D}(p) $ for any prime $ p $ with $ \chi_{D}(p)=1 $ and $ p\nmid m_{2} $, and so for any prime $ p $ with $ p\nmid m_{2} $. Hence $ \psi_{t,N}(p)=\chi_{D}(p) $ for any prime $ p $ not dividing $ m_{1}m_{2} $. Therefore, $ \psi_{t,N}\simeq\chi_{D} $, yielding a contradiction.

\noindent
\textbf{Case II:}  (b) holds with $ \varepsilon=1 $:

By Dirichlet's theorem, there exists a prime $ q $ with $ q\equiv 1\pmod{m} $. Then $ \chi_{D}(q)=1 $. By (b), we have $
\psi_{t,N}(q)=-\chi_{D}(q)=-1 $.
But $ \psi_{t,N}(q)=\psi_{t,N}(1)=1 $ from $ q\equiv 1\pmod{m} $. This is a contradiction.

\noindent
\textbf{Case III:}  (a) holds with $ \varepsilon=1 $:

For any prime $ p $ with $ \chi_{D}(p)\not=0$ and $ p\nmid m $, since $ \gcd(p^{2},m)=1 $, Dirichlet's theorem implies that there must exist a prime $ q\equiv p^{2}\pmod{m} $ and thus $ \chi_{D}(q)=\chi_{D}(p^{2})=1 $. By (a), we deduce that
\begin{align*}
\psi_{t,N}(p^{2})=\psi_{t,N}(q)=\chi_{D}(q)=1.
\end{align*}
Therefore, $ \psi_{t,N}(p)\in\mathbb{R} $ for any prime with $ \chi_{D}(p)\not=0 $ and $ p\nmid m $. Note that $ \chi_{D}(p)=0 $ implies $ \psi_{t,N}(p)=0 $. It follows that $ \psi_{t,N}(p)\in\mathbb{R} $ for any prime $ p $. Hence $ \psi_{t,N} $ is a real character.

From the above discussion, if the subsequence does not exhibit sign changes, then $ \psi_{t,N} $ must be real. We now suppose that $ \psi_{t,N} $ is real and $\psi_{t,N}\not\simeq \chi_D^j$, and will prove that the sign changes do indeed occur. From $
\mathfrak{a}_{\mathfrak{f}}\left(tp^2\right)=-\psi_{t,N}(p)p^{k-1} \mathfrak{a}_{\mathfrak{f}}(t)
$, it is sufficient to show that the following both occur:
\begin{itemize}
\item[(c)] there exist infinitely many primes $ p $ with $ \chi_{D}(p)=\varepsilon $  such that $ \psi_{t,N}(p)=\chi_{D}(p)=\varepsilon $;
\item[(d)] there exist infinitely many primes $ p $ with $ \chi_{D}(p)=\varepsilon $ such that $\psi_{t,N}(p)=-\chi_{D}(p)=-\varepsilon $.
\end{itemize}
By analytic number theory it suffices to show that for $ \varepsilon=\pm 1 $ the sum
\begin{align*}
\sum\limits_{\substack{p\le x\\\chi_{D}(p)=\varepsilon\,,\,\psi_{t,N}(p)=\pm\varepsilon}}\dfrac{\log p}{p}
\end{align*}
diverges as $x\to\infty$. Without loss of generality, we only consider the case $ \chi_{D}(p)=1 $ and $ \psi_{t,N}(p)=-1 $. Then one can check that
\begin{multline*}
\sum\limits_{\substack{p\le x\\\chi_{D}(p)=1\,,\,\psi_{t,N}(p)=-1}}\dfrac{\log p}{p}=\dfrac{1}{2}\left(\sum\limits_{\substack{p\le x\\\psi_{t,N}(p)=-1}}\dfrac{\log p}{p}+	\sum\limits_{\substack{p\le x\\\psi_{t,N}(p)=-1}}\dfrac{\chi_{D}(p)\log p}{p}\right)+O(1)  \\
=\dfrac{1}{4}\left(\sum\limits_{p\le x}\dfrac{\log p}{p}-\sum\limits_{p\le x}\dfrac{\psi_{t,N}(p)\log p}{p}+\sum\limits_{p\le x}\dfrac{\chi_{D}(p)\log p}{p} -\sum\limits_{p\le x}\dfrac{\psi_{t,N}(p)\chi_{D}(p)\log p}{p}\right)+O(1).
\end{multline*}
Since $ \chi_{D}$, $\psi_{t,N}$, and $\psi_{t,N}\chi_{D} $ are nonprincipal and real (from the assumptions that $\psi_{t,N}\not\simeq \chi_D^j$ and $D\neq 1$), the associated $L$-functions do not have poles at $ s=1 $. So the last three terms contribute $ O(1) $ by Lemma \ref{lem:Dirichlet} and the first term contributes $ \log x+O(1) $  by the Prime Number Theorem. Hence
\begin{align*}
\sum\limits_{\substack{p\le x\\\chi_{D}(p)=1\,,\,\psi_{t,N}(p)=-1}}\dfrac{\log p}{p}=\dfrac{1}{4}\log x+O(1),
\end{align*}
and the claim is proved. Thus \eqref{eqn:primeDseq} exhibits infinitely many sign changes when $ \psi_{t,N}\not\simeq\chi_{D}^{j} $ for $ j=0,1 $.

Finally note that \eqref{eqn:ftnosignchange} cannot hold for both $\varepsilon$ and $-\varepsilon$ because that would contradict the assumption that $a_{f_t}(1)=\lambda_{f_t}(1)=\mathfrak{a}_{\mathfrak{f}}(t)\neq 0$. Moreover, the condition $\psi_{t,N}\simeq \chi_D^j$ can occur for at most one choice of $t$ and since $D\mid N$ and $\psi$ is a character modulo $N$, we have $t\mid N$. 
\end{proof}
While Theorem \ref{thm:signchangedivisor} (1) should yield many examples where sign changes are not exhibited, it is not entirely obvious that the conditions $\psi_{t,N}\simeq \chi_D^j$, $\mathfrak{a}_{\mathfrak{f}}(t)\neq 0$, and \eqref{eqn:ftnosignchange} are simultaneously satisfied.  We hence construct an explicit example where we can verify all three conditions. The construction goes through the theory of spinor genera of ternary quadratic forms. 

\begin{proof}[Proof of Theorem \ref{thm:signchangedivisor} (2)]
Consider the quadratic forms 
\begin{align*}
Q_{1}(x,y,z)&:=x^{2}+48y^{2}+144z^{2}, \\
Q_{2}(x,y,z)&:=4x^{2}+48y^{2}+49z^{2}+4xz+48yz,
\end{align*}
discussed for example in \cite[(4.18), p.\hskip 0.1cm 9]{baeza_representation_2004} and \cite[\S 7.3 An example]{Hanke_2004}.

The forms $Q_1$ and $Q_2$ are in the same spinor genus and are moreover representatives for the only two classes in their spinor genus.

The level $ N_{Q_j} $ and the determinant $ D_{Q_j} $ of $ Q_{j} $ ($ j=1,2 $) are $ 2^{6}3^{2} $ and $ 2^{8}3^{3} $, respectively. Schulze-Pillot found that $ t=1 $ is a primitive spinor exception and is represented by $ Q_{1} $, but not represented by $ Q_{2} $. By the theory of spinor genera, the spinor genus does not primitively represent the integers $tp^2$, or in other words, since all representations of $tp^2$ come from representations of $t$,
\[
r(tp^2,Q_j)=r(t,Q_j).
\]
Thus in particular
\[
r(tp^{2},Q_{2})=r(t,Q_2)=0
\]
for any odd prime $ p\equiv -1\pmod{3} $, i.e. $(-3/p)=-1$.

Plugging in the expansion \eqref{eqn:thetaQexpand} and noting that $r(t,Q)$ is the $t$-th coefficient of $\theta_Q$, \eqref{eqn:genspnexpand} implies that
\[
0=r(t,Q_{2})=a_{E}(t,Q_{2})+a_{H}(t,Q_{2})+a_{f}(t,Q_{2})=r(t,\spn(Q_{2}))+a_{f}(t,Q_{2}).\]
Therefore, we have 
\[
 a_{f}(t,Q_{2})=-r(t,\spn(Q_{2}))\not=0.
\]
For any inert prime $ p $ in $ \mathbb{Q}(\sqrt{-3}) $, i.e., $ (-3/p)=-1 $,  replacing $ t $ by $ tp^{2} $, we have $ a_{f}(tp^{2},Q_{2})=-r(tp^{2},\spn(Q_{2})) $ analogously from $ r(tp^{2},Q_{2})=0 $. Since $ t $ is represented by $ \spn(Q_{2}) $, $ tp^{2} $ is not primitivley represented by $ \spn(Q_{2}) $. Therefore, \[ r(tp^{2},\spn(Q_{2}))-r(t,\spn(Q_{2}))=r^{*}(tp^{2},\spn(Q_{2}))=0 ,\] where $ r^{*}(n,\spn(Q)) $ denotes the number of primitive representation of $ n $ by $ Q $. Namely, $ r(tp^{2},\spn(Q_{2}))=r(t,\spn(Q_{2})) $. It follows that 
\[	a_{f}(t,Q_{2})=-r(t,\spn(Q_{2}))=-r(tp^{2},\spn(Q_{2}))=a_{f}(tp^{2},Q_{2}).
\]
Hence we see that $ a_{f}(tp^{2},Q_{2}) $ has the same sign for $ t=1 $ and any prime $ p\equiv -1\pmod{3} $. 
\end{proof}

\end{document}